\documentclass[11pt,reqno]{amsart}
\usepackage{color}
\usepackage[english]{babel}
\usepackage{amsfonts}
\usepackage{amsmath}
\usepackage{amsthm}
\usepackage{amssymb}
\usepackage[misc]{ifsym}
\usepackage{bbm}
\usepackage{mathrsfs}
\usepackage{esint}
\usepackage{faktor}
\usepackage[utf8]{inputenc}
\usepackage{afterpage}
\usepackage[left=2.9cm,right=2.9cm,top=2.8cm,bottom=2.8cm]{geometry}
\usepackage{graphicx}
\usepackage{enumitem}
\usepackage[dvipsnames]{xcolor}
\usepackage[colorlinks=true,urlcolor=blue,citecolor=blue,linkcolor=blue,linktocpage,pdfpagelabels,bookmarksnumbered,bookmarksopen]{hyperref}

\newcommand{\N}{\mathbb{N}}

\newcommand{\R}{\mathbb{R}}

\newcommand{\D}{\mathcal{D}}
\renewcommand{\H}{\mathcal{H}}
\newcommand{\Div}{\mathrm{div} \, }
\newcommand{\dx}{\, {\rm d} x}

\newcommand{\dt}{\, {\rm d} t}

\newcommand{\dxi}{\, {\rm d} \xi}
\newcommand{\dH}{\, {\rm d} \mathcal{H}}
\newcommand{\eps}{\varepsilon}
\renewcommand{\phi}{\varphi}
\newcommand{\loc}{{\rm loc}}

\newtheorem{lemma}{Lemma}[section]
\newtheorem{thm}[lemma]{Theorem}
\newtheorem{prop}[lemma]{Proposition}

\theoremstyle{definition}
\newtheorem{defi}[lemma]{Definition}
\newtheorem{rmk}[lemma]{Remark}
\newtheorem{ex}[lemma]{Example}

\numberwithin{equation}{section}
\DeclareMathOperator*{\esssup}{ess \, sup}
\DeclareMathOperator*{\essinf}{ess \, inf}

%%%%%%%%%%%%%%%%%%%%%%%%%%%%%%%%%%%%%%%%%%%%%%%%%%%
%\linenumbers
\begin{document}

\title[Strong solutions to singular discontinuous $p$-Laplacian problems]{Strong solutions to singular \\ discontinuous $p$-Laplacian problems}

\author[U. Guarnotta]{Umberto Guarnotta}
\address[U. Guarnotta]{Dipartimento di Ingegneria Industriale e Scienze Matematiche, Università Politecnica delle Marche, Via Brecce Bianche 12,
60131 Ancona, Italy}
\email{u.guarnotta@univpm.it}

\author[S.A. Marano]{Salvatore A. Marano}
\address[S.A. Marano]{Dipartimento di Matematica e Informatica, Universit\`a degli Studi di Catania, Viale A. Doria 6, 95125 Catania, Italy}
\email{marano@dmi.unict.it}

\begin{abstract}
In this paper, the existence of positive strong solutions to a Dirichlet $p$-Laplacian problem with reaction both singular at zero and highly discontinuous is investigated. In particular, it is only required that the set of discontinuity points has Lebesgue measure zero.
\end{abstract}

\maketitle

{
\let\thefootnote\relax
\footnote{{\bf{MSC 2020}}: 35J60; 35J75; 35J99.}
\footnote{{\bf{Keywords}}: Dirichlet problem; $p$-Laplacian; singular term; discontinuous nonlinearity; strong solution.}
\footnote{\Letter \quad Corresponding author: Umberto Guarnotta (u.guarnotta@univpm.it).}
}
\setcounter{footnote}{0}

% \begin{center}
% \begin{minipage}{11cm}
% \begin{small}
% \tableofcontents
% \end{small}
% \end{minipage}
% \end{center}
%
\section{Introduction}
Let $\Omega$ be a bounded domain in $\R^N$, $N\geq 2$, with a $C^2$ boundary $\partial\Omega$ and let $1<p<N$. This paper investigates the problem
\begin{equation}\label{problem}
\tag{P}
\left\{
\begin{alignedat}{2}
-\Delta_p u & =f(u)\quad && \mbox{in}\;\;\Omega, \\
u & >0 \quad && \mbox{in}\;\;\Omega, \\
u & =0 \quad &&\mbox{on}\;\;\partial\Omega,
\end{alignedat}
\right.
\end{equation}
where $\Delta_p$ stands for the usual $p$-Laplace operator while $f:\R^+\to\R^+_0$ is a Borel measurable function satisfying the conditions below, collectively denoted by \hyperlink{Hf}{$({\rm H}_f)$}. Hereafter,
$$\underline{f}(s):=\lim_{\delta\to 0^+}\essinf_{|t-s|<\delta} f(t)\quad\mbox{and}\quad
\overline{f}(s):=\lim_{\delta\to 0^+}\esssup_{|t-s|<\delta} f(t)\quad\forall\, s\in\R^+.$$
\begin{enumerate}
[
label={$
%({\rm H}_f)
{\rm (\roman*)}$},
ref={$({\rm H}_f){\rm (\roman*)}$}
]
\hypertarget{Hf}{}
\item \label{localbound} $f\in L^\infty_\loc(\R^+)$.
\vskip5pt
\item \label{singular}
There exists $\gamma\in(0,1)$ such that $\displaystyle{\limsup_{s\to 0^+}}\, s^\gamma f(s)< +\infty$.
\vskip0pt
\item \label{subsol}
$\displaystyle{\liminf_{s\to 0^+}}\frac{f(s)}{s^{p-1}}>\lambda_1$, with $\lambda_1>0$ being the first eigenvalue of $(-\Delta_p,W^{1,p}_0(\Omega))$.
\vskip2pt
\item\label{sublinear}
$\displaystyle{\limsup_{s\to+\infty}}\frac{f(s)}{s^{p-1}}<\lambda_1$.
\vskip5pt
\item \label{zeromeasure}
If $\D_f\subseteq\R^+$ indicates the set of discontinuity points for $f$ then $|\D_f|=0$.
\vskip5pt
\item \label{fzero}
$\underline{f}(s)=0,\;\; s\in\R^+\implies f(s)=0$.
\end{enumerate}
Hence, reactions we treat
\begin{itemize}
\item can be {\it weakly singular at zero}. As an example, $f(s):=\frac{1}{s^\gamma}$, $s\in\R^+$, fulfills \hyperlink{Hf}{$({\rm H}_f)$}.
\item may have a {\it `wide' set of discontinuities}, on condition that its measure is zero. In particular, even for non-singular $f$, this prevents using the classical dual method \cite{AB}.
\end{itemize}
The literature devoted to singular problems looks by now daily increasing; let us mention the monographs \cite{GR,Mo} as well as the recent survey \cite{GLM}. Contrariwise, the current production on differential equations with (possibly highly) discontinuous reactions is decidedly less rich and the best period probably was from the 1980s to the 2010s; see the monographs \cite{MR,GP2,CLM}, besides the survey \cite{LM}. Thus, trying to study problems with both issues, either in a bounded domain - like here - or in the whole space, might be of some interest. Actually, there are already a few related results, but for singular nonlinearities discontinuous at just one point \cite{DPTS,RSSC,SPC}.

Let us now recall the two notions of solution we will employ.
\begin{defi}
%\label{weaksol}
A function $u\in W^{1,p}_0(\Omega)$ is called a (weak) solution to \eqref{problem} when $u>0$ in $\Omega$ and
\begin{equation*}
%\label{weakform}
\int_\Omega |\nabla u|^{p-2}\nabla u\nabla \phi \dx = \int_\Omega f(u)\phi \dx \quad \forall\, \phi\in W^{1,p}_0(\Omega),
\end{equation*}
with finite integrals.
\end{defi}
\begin{defi}
We say that $u\in W^{1,p}_0(\Omega)$ is a strong solution of \eqref{problem} if $u>0$ in $\Omega$,
$$|\nabla u|^{p-2}\nabla u \in W^{1,2}_\loc(\Omega,\R^N),$$
and $-\Delta_p u(x) = f(u(x))$ for almost every $x\in\Omega$.
\end{defi}
Evidently, `strong' implies `weak' but not viceversa. The main result of this paper is the following.
\begin{thm}\label{mainthm}
Let \hyperlink{Hf}{$({\rm H}_f)$} be satisfied. Then problem \eqref{problem} admits at least a strong solution $u\in C^{1,\alpha}(\overline{\Omega})$.
\end{thm}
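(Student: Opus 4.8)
The plan is to approximate the discontinuous reaction by continuous ones, solve the resulting problems by the sub--supersolution method, and then pass to the limit, using the strong-solution structure to handle the level sets on which the equation degenerates.

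\textbf{Sub- and supersolutions.} First I would build an ordered pair $0\prec\underline u\le\overline u$ in $C^{1,\alpha}(\overline\Omega)$, both comparable to $d(x):=\mathrm{dist}(x,\partial\Omega)$ near $\partial\Omega$. For the subsolution, hypothesis \ref{subsol} yields $\eta,\delta>0$ with $\underline f(s)\ge(\lambda_1+\eta)s^{p-1}$ on $(0,\delta)$; taking $\underline u:=\sigma\varphi_1$, with $\varphi_1>0$ the first eigenfunction and $\sigma>0$ small, gives $-\Delta_p\underline u=\lambda_1\underline u^{p-1}<f(\underline u)$. For the supersolution I would combine \ref{sublinear}, which provides $\theta>0$ and $M$ with $f(s)\le(\lambda_1-\theta)s^{p-1}$ for $s>M$, with the singular bound \ref{singular} and the local boundedness \ref{localbound} on $(0,M]$, solving a sub-eigenvalue problem of the form $-\Delta_p\overline u=(\lambda_1-\theta)\overline u^{p-1}+g$, with $g$ absorbing the bounded and (mildly) singular parts; coercivity forces existence, and comparison gives $\underline u\le\overline u$.

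\textbf{Regularization and approximate solutions.} Since $|\D_f|=0$, almost every point is one of continuity, so I can pick continuous $f_n$ with $\underline f\le\liminf_n f_n\le\limsup_n f_n\le\overline f$ pointwise and a uniform majorant $f_n(s)\le c\,(1+s^{-\gamma})$ compatible with the pair above. For large $n$ the pair $(\underline u,\overline u)$ is still an ordered sub/supersolution for $-\Delta_p u=f_n(u)$, so the sub--supersolution method for singular problems yields $u_n\in[\underline u,\overline u]$ solving it. As $\underline u\le u_n\le\overline u$, the right-hand sides $f_n(u_n)$ are bounded on compact subsets and dominated by $c\,d^{-\gamma}$ near $\partial\Omega$, uniformly in $n$; regularity theory for the $p$-Laplacian then bounds $\{u_n\}$ in $C^{1,\alpha}(\overline\Omega)$ and, through the strong-solution estimates, bounds $V_n:=|\nabla u_n|^{p-2}\nabla u_n$ in $W^{1,2}_\loc(\Omega,\R^N)$. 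Up to a subsequence, $u_n\to u$ in $C^1(\overline\Omega)$ with $\underline u\le u\le\overline u$ (hence $u>0$ and $u\in C^{1,\alpha}$), $V_n\rightharpoonup V:=|\nabla u|^{p-2}\nabla u$ in $W^{1,2}_\loc$, and $-\Delta_p u_n\rightharpoonup-\Delta_p u$ in $L^2_\loc(\Omega)$.

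\textbf{Passing to the limit.} Here is the crux. I would split $\Omega$ into $A:=\{\nabla u\neq0\}$ and $Z:=\{\nabla u=0\}$, the latter compactly contained in $\Omega$ by the boundary behaviour $u\sim d$. On $A$, the coarea formula and $|\D_f|=0$ give $|\{x\in A:u(x)\in\D_f\}|=0$, so at a.e.\ $x\in A$ the function $f$ is continuous at $u(x)$ and the $C^1$-convergence yields $f_n(u_n(x))\to f(u(x))$; hence $-\Delta_p u=f(u)$ a.e.\ on $A$. On $Z$ one has $V=0$, and since $V\in W^{1,2}_\loc$ vanishes on $Z$ its weak gradient vanishes a.e.\ there, whence $-\Delta_p u=-\Tr(\nabla V)=0$ a.e.\ on $Z$. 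It remains to show $f(u)=0$ a.e.\ on $Z$, for which \ref{fzero} reduces matters to $\underline f(u)=0$ a.e.\ on $Z$. Suppose instead $\underline f(u)\ge2\beta>0$ on some $S\subseteq Z$ with $|S|>0$; then $f_n\ge\beta$ near the relevant values for large $n$, so $\int_S f_n(u_n)\ge\beta|S|$, while $\int_S f_n(u_n)=\int_S(-\Delta_p u_n)\to\int_S(-\Delta_p u)=0$ by the weak $L^2_\loc$-convergence (legitimate since $S\Subset\Omega$), a contradiction. Thus $f(u)=0=-\Delta_p u$ a.e.\ on $Z$, and combining the two sets shows that $u$ is the desired strong solution. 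The main obstacle is precisely this last step: the reaction is tested on the actual solution $u$, so its discontinuities cannot be sidestepped a priori. The strong-solution regularity $V\in W^{1,2}_\loc$ is what makes $-\Delta_p u=0$ rigorous on the degenerate set $Z$, and condition \ref{fzero} — together with the weak convergence of $-\Delta_p u_n$ — is exactly what forces the reaction to vanish there; without either ingredient the limit could fail to solve the problem on the flat parts of $u$.
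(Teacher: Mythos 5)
Your limit-passing machinery is sound and is essentially the paper's: uniform $W^{1,2}_\loc$ bounds on the stress field $V_n:=|\nabla u_n|^{p-2}\nabla u_n$ via Cianchi--Maz'ya, Stampacchia-type vanishing of $\nabla V$ on the set where $V=0$ (this is exactly Proposition \ref{locality}), and hypotheses \ref{zeromeasure}, \ref{fzero} to reconcile $-\Delta_p u=0$ with $f(u)=0$ on the critical set; your contradiction argument on $Z$ is a legitimate variant of the paper's differential inclusion $\underline{f}(u)\le v\le\overline{f}(u)$ from Lemma \ref{solincl}. The genuine gap is upstream, in the approximation scheme. You claim that for large $n$ the pair $(\underline u,\overline u)$ is still an ordered sub/supersolution pair for $-\Delta_p u=f_n(u)$. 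For $\underline u=\sigma\varphi_1$ to be a subsolution of the $n$-th problem you need $f_n(s)\ge\lambda_1 s^{p-1}$ for \emph{every} $s$ in the full range $(0,\|\underline u\|_\infty]$, since $\underline u$ fills this interval with open, nonempty (hence positive-measure) preimages. The pointwise sandwich $\underline f\le\liminf_n f_n\le\limsup_n f_n\le\overline f$ gives nothing of the sort for any fixed $n$: take $f$ continuous near zero (say $f(s)=2\lambda_1 s^{p-1}$ there) and $f_n:=f\,\theta_n$, with $\theta_n$ a continuous cutoff vanishing on a short interval $I_n\subseteq(0,\|\underline u\|_\infty)$ that slides with $n$; all your conditions hold, yet every $f_n$ violates the subsolution inequality on $\underline u^{-1}(I_n)$, a set of positive measure. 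Nor does the natural repair $f_n:=f\ast\rho_{1/n}$ work automatically: hypothesis \ref{subsol} only yields $f_n(s)\ge\essinf_{|t-s|<1/n}f(t)\ge(\lambda_1+\eta)\,(s-1/n)_+^{p-1}$, and this degrades below $\lambda_1 s^{p-1}$ precisely in the boundary strip where $\underline u(x)\lesssim 1/n$; so the inequality $f_n(\underline u)\ge\lambda_1\underline u^{p-1}$ fails near $\partial\Omega$ no matter how large $n$ is (for $1<p<2$ one can check it genuinely fails for symmetric mollifiers). Hence the per-$n$ sub--supersolution step, on which your entire construction of $u_n$ rests, does not go through as stated.

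This is exactly the difficulty the paper's scheme is engineered to avoid: it truncates first, setting $g(x,s):=f(\max\{\underline u(x),s\})$ as in \eqref{truncation}, so the regularized reaction never samples $f$ below the level $\underline u(x)$; it then solves \eqref{auxprob} by direct minimization of a coercive functional (Lemma \ref{gestlemma}, with $c_2<\lambda_1$) --- no supersolution is needed at all --- and proves the comparison $u\ge\underline u$ only for the \emph{limit} function, in Lemma \ref{solincl}, where the window bounds \eqref{bounds1}--\eqref{bounds4} together with the safety margin $\|\underline u\|_\infty<\delta/2$ from \eqref{condoveru} make the estimate close. A second, related weakness: both of your limit arguments ($f_n(u_n(x))\to f(u(x))$ at points where $f$ is continuous at $u(x)$, and $f_n(u_n)\ge\beta$ on $S$ in the contradiction step) require two-sided window bounds $\essinf_{|t-s|<\eps_n}f\le f_n(s)\le\esssup_{|t-s|<\eps_n}f$ with $\eps_n\to0$; the pointwise $\liminf/\limsup$ sandwich does not control $f_n$ along moving arguments $u_n(x)\to u(x)$. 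Mollification supplies such bounds (this is \eqref{bounds1} in the paper), so that part is fixable --- but once you also repair the subsolution issue, you are driven to regularizing a truncated nonlinearity and deferring the comparison to the limit, i.e., to the paper's own proof.
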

We think useful to shortly outline the proof. Condition \ref{subsol} yields a positive sub-solution $\underline{u}\in C^{1,\alpha}_0(\overline{\Omega})$ of \eqref{problem}, that allows to suitably truncate the right-hand side $f$. We next associate with \eqref{problem} a family of regularized problems \eqref{auxprob} depending on a parameter $\eps\in (0,1)$. Under hypotheses \ref{localbound}--\ref{sublinear}, each \eqref{auxprob} possesses a solution $u_\eps\in W^{1,p}_0(\Omega)$; cf. Theorem \ref{solauxprob}. If $\eps:=1/n$ and $u_n:=u_{1/n}$, $n\in\N$, then Lemma \ref{solincl} ensures that
\begin{itemize}
\item up to sub-sequences, $u_n\rightharpoonup u$ in $W^{1,p}_0(\Omega)$,
\item $-\Delta_p u=v\in L^q(\Omega)_+\cap W^{-1,p'}(\Omega)$ for appropriate $q>1$, and
\item $\underline{f}(u)\leq v\leq\overline{f}(u)$ in $\Omega$, namely $-\Delta_p u\in\partial F(u)$ in $\Omega$, where $\partial F$ denotes the Clarke sub-differential of $F(s):=\int_0^s f(t)\dt$.
\end{itemize} 
Moreover, $u\in C^{1,\alpha}(\overline{\Omega})$ by Lemma \ref{regularity}. To show that $u$ strongly solves \eqref{problem} we combine assumptions \ref{zeromeasure}--\ref{fzero} with the so-called strong locality property of the $p$-Laplacian for Sobolev functions. Recall that
\begin{defi}
If $X,Y\subseteq L^1_\loc(\Omega)$ are two Banach spaces and $A$ is a map from $X$ to the space of distributions over $\Omega$, then $A$ is called strongly local of $X$ into $Y$ provided every distributional solution $u\in X$ of $A(u)=v$, with $v\in Y$, satisfies
$$ A(u) = 0 \quad\mbox{a.e. in}\;\; u^{-1}(c) \quad \mbox{for all}\;\; c\in\R.$$
\end{defi}
\begin{ex}\label{slex}
Here are some meaningful special cases.
\begin{enumerate}[label={(\arabic*)},ref={(\arabic*)}]
\item By Stampacchia's theorem \cite[Theorem 4.4 (iv), p. 153]{EG}, weak derivatives turn out strongly local operators from $W^{1,1}_\loc(\Omega)$ to $L^1_\loc(\Omega)$.
\item \label{CZ} The Laplacian is strongly local of $W^{1,2}(\Omega)$ into $L^p(\Omega)$ for all $p>1$. This is a consequence of Calderon-Zygmund's theorem \cite[Theorem 9.9]{GT}. 
\item By \cite[Proposition 4.2]{ABC}, the Laplacian turns out strongly local from $W^{1,2}(\Omega)$ to the space of Radon measures.
\item The distributional Laplacian is strongly local (in a suitable sense) of $L^1_\loc(\Omega)$ into the space of Radon measures; see \cite[Theorem 1.1]{APR} and \cite[Section 3]{MM}.
\item The divergence is not a strongly local operator from $C^0(\Omega,\R^N)$ to $L^\infty(\Omega)$; cf. \cite[Section 5]{ABC2} and \cite[Example 1.1]{APR}.
\item The $p$-Lapacian turns out strongly local of $W^{1,p}(\Omega)$ into $L^2_\loc(\Omega)$; cf. Proposition \ref{locality} below.
\end{enumerate}
\end{ex}
Finally, it is worth pointing out that our results hold true, with the same proofs, even when the $p$-Laplacian is replaced by the $(p,q)$-Laplacian (or the more general non-homogeneous operator called $a$-Laplacian; see, e.g., \cite[Appendix I]{GG}), provided the stronger condition
$$ \liminf_{s\to 0^+} f(s) > 0 $$
substitutes \ref{subsol}. In fact, this assumption comes into play only in the construction of the sub-solution (Lemma \ref{subsollemma}), which can now be obtained arguing as in \cite[Lemma 2.7]{CGL} (cf. also \cite[Lemma 3.3 and Theorem 3.5]{CGL} for the adaption of Lemma \ref{regularity} below).

\section{Preliminaries}
Henceforth, $\Omega$ is a bounded domain of the real Euclidean $N$-space $(\mathbb{R}^N,|\cdot|)$, $N\geq 2$, with a $C^2$-boundary $\partial\Omega$, while $\nu(x)$ indicates the outward unit normal vector to $\partial\Omega $ at its point $x$. Denote by $d:\overline{\Omega}\to[0,+\infty)$ the distance function of $\Omega$, i.e., $d(x):={\rm dist}(x,\partial\Omega)$ for all $x\in\overline{\Omega}$. Given $\delta>0$, set
\begin{equation*}
%\label{defomdel}
\Omega_{\delta }:=\{x\in \Omega:d(x)<\delta\}.
\end{equation*}
If $X(\Omega)$ is a real-valued function space on $\Omega$ and $u,v\in X(\Omega)$ then $u\leq v$ means $u(x)\leq v(x)$ a.e. in $\Omega$. Analogously for $u<v$, etc. To shorten notation, define
\begin{equation*}
\Omega (u\leq v):=\{x\in \Omega :u(x)\leq v(x)\},\quad X(\Omega)_+:=\{w\in X(\Omega): w\geq 0\}.
\end{equation*}
The symbol $|E|$ indicates the $N$-dimensional Lebesgue measure of $E\subseteq \mathbb{R}^{N}$, $\chi_E$ denotes its characteristic function,
$$t_\pm:=\max\{\pm t,0\},\quad t\in\R,$$
and $C$, $C_1$, etc. are positive constants, which may change in value from line to line, whose dependencies are specified when necessary.

Let $1<p<N$. We write $p'$ for the conjugate exponent of $p$ while $p^*:=\frac{Np}{N-p}$. The Sobolev space $W^{1,p}_0(\Omega)$ is equipped with the Poincaré norm 
\begin{equation*}
\Vert u\Vert_{1,p}:=\Vert |\nabla u|\Vert_p,\quad u\in W^{1,p}_0(\Omega),
\end{equation*}
where, as usual,
\begin{equation*}
\Vert v\Vert_q:=\left\{ 
\begin{array}{ll}
\left(\int_{\Omega }|v(x)|^q\dx\right)^{\frac{1}{q}} & \text{ if }1\leq q<+\infty, \\ 
\phantom{} &  \\ 
\underset{x\in\Omega}{\esssup}\, |v(x)| & \text{ when } q=+\infty.
\end{array}
\right.
\end{equation*}
Moreover, $W^{-1,p'}(\Omega)$ indicates the dual space of $W^{1,p}_0(\Omega )$,
\begin{equation*}
C^{1,\tau}_0(\overline{\Omega}):=\{u\in C^{1,\tau}(\overline{\Omega}): u\lfloor_{\partial\Omega}=0\},\quad 0<\tau<1,
\end{equation*}
and $v\in L^q(\Omega)\cap W^{-1,p'}(\Omega)$, $q\geq 1$, means that the linear map $u\mapsto \int_\Omega uv\dx$ generated by the function $v\in L^q(\Omega)$ turns out continuous on $W^{1,p}_0(\Omega)$.
\begin{prop}\label{distsumm}
If $0<\gamma<1<q<\frac{1}{\gamma}$ then $d^{-\gamma}\in L^q(\Omega)$.
\end{prop}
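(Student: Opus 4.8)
The plan is to exploit the two facts that $\gamma q<1$ (which is exactly what $q<1/\gamma$ says) and that, since $\partial\Omega$ is of class $C^2$, the distance function $d$ is well behaved in a boundary strip. First I would split
\begin{equation*}
\int_\Omega d^{-\gamma q}\dx = \int_{\Omega\setminus\Omega_\delta} d^{-\gamma q}\dx + \int_{\Omega_\delta} d^{-\gamma q}\dx
\end{equation*}
for a sufficiently small $\delta>0$. The first summand is harmless: on $\Omega\setminus\Omega_\delta$ one has $d\geq\delta$, whence $d^{-\gamma q}\leq\delta^{-\gamma q}$, and since $|\Omega|<+\infty$ this piece is finite.

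The genuine contribution comes from the boundary strip $\Omega_\delta$. Here I would invoke the classical regularity of the distance function for $C^2$ domains (e.g. \cite[Lemma 14.16]{GT}): there exists $\delta>0$ such that $d\in C^2(\overline{\Omega_\delta})$ and $|\nabla d|\equiv 1$ throughout $\Omega_\delta$. The coarea formula then reduces the $N$-dimensional integral to a one-dimensional one,
\begin{equation*}
\int_{\Omega_\delta} d^{-\gamma q}\dx = \int_0^\delta t^{-\gamma q}\,\mathcal{H}^{N-1}\big(\{x\in\Omega:\ d(x)=t\}\big)\dt.
\end{equation*}
Because $\partial\Omega$ is compact and of class $C^2$, the level sets $\{d=t\}$ are compact $C^1$ hypersurfaces whose $(N-1)$-dimensional measure stays uniformly bounded, say by $M>0$, for $t\in[0,\delta]$ (shrinking $\delta$ if necessary). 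Consequently the right-hand side is dominated by $M\int_0^\delta t^{-\gamma q}\dt$.

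It then remains only to observe that this power integral converges precisely because $\gamma q<1$; explicitly $\int_0^\delta t^{-\gamma q}\dt = \frac{\delta^{1-\gamma q}}{1-\gamma q}<+\infty$. Combining the two pieces yields $d^{-\gamma}\in L^q(\Omega)$. The single delicate point is the behaviour of $d$ near $\partial\Omega$: the $C^2$ regularity of the boundary is exactly what furnishes both the eikonal identity $|\nabla d|=1$ and the uniform control on the perimeter of the level sets, and hence legitimizes the coarea reduction to a scalar power integral. I expect this structural input to be the main (albeit standard) obstacle, everything else being an elementary convergence check.
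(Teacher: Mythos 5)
Your proposal follows essentially the same route as the paper: split $\Omega$ into the strip $\Omega_{\delta}$ and its complement, handle the complement trivially since $d\geq\delta$ there, reduce the strip integral via the coarea formula to $\int_0^\delta t^{-\gamma q}\,\H^{N-1}(\Omega(d=t))\dt$, and conclude from $\gamma q<1$ once the level-set measures are uniformly bounded. (The paper first reduces to showing $d^{-r}\in L^1(\Omega)$ for all $r\in(0,1)$, but that difference is cosmetic.)

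The one place where your argument stops short is exactly the point you yourself flag as the crux: you assert that, because $\partial\Omega$ is compact and of class $C^2$, the level sets $\Omega(d=t)$ have uniformly bounded $\H^{N-1}$-measure for $t\in[0,\delta]$. The assertion is true, but ``compactness plus $C^2$ regularity'' is not a proof; some quantitative mechanism must be exhibited. The paper supplies it with a short divergence-theorem computation: since $d\in C^2(\overline{\Omega_\delta})$, Weierstrass' theorem gives $|\Delta d|\leq C$ on the strip; since moreover $|\nabla d|=1$ there and $\nabla d$ is orthogonal to both $\partial\Omega$ and $\Omega(d=t)$, applying the divergence theorem to $\nabla d$ on $\Omega(d<t)$ yields
\begin{equation*}
\H^{N-1}(\Omega(d=t))=\H^{N-1}(\partial\Omega)+\int_{\Omega(d<t)}\Delta d\dx
\leq \H^{N-1}(\partial\Omega)+C|\Omega|=:M \quad\forall\, t\in(0,\delta].
\end{equation*}
Alternatively, you could close the gap with the tubular-neighborhood parametrization: for small $t$ the map $x\mapsto x-t\nu(x)$ is a $C^1$ diffeomorphism of $\partial\Omega$ onto $\Omega(d=t)$ whose tangential Jacobian is continuous in $(x,t)$ and equals $1$ at $t=0$, hence uniformly bounded for $t\in[0,\delta]$ after shrinking $\delta$; the area formula then gives $\H^{N-1}(\Omega(d=t))\leq C\,\H^{N-1}(\partial\Omega)$. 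Either argument is short, so the gap is genuinely fillable; but as written, the decisive estimate of your proof is assumed rather than proved.
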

\begin{proof}
The conclusion is equivalent to the claim $d^{-r}\in L^1(\Omega)$ for all $r\in(0,1)$. Since $\partial\Omega\in C^2$, Lemma 14.16 in \cite{GT} furnishes $\delta>0$ such that $d\in C^2(K)$, with $K:=\overline{\Omega_\delta}$. In particular, by Weierstrass' theorem,
\begin{equation}\label{lapldbound}
|\Delta d(x)|\leq C\quad\forall\, x\in K.
\end{equation}
Taking a smaller $\delta$ if necessary, one has $|\nabla d|=1$ on $K$ (vide \cite[Theorem 3.14]{EG}) as well as $\nabla d$ orthogonal to both $\partial\Omega$ and $\Omega(d=t)$ for every $t\in (0,\delta]$. Through the divergence theorem we thus obtain
\begin{equation}\label{divthm}
\begin{aligned}
\int_{\Omega(d<t)}\Delta d \dx & =\int_{\Omega(d=t)} |\nabla d|^2 \dH^{N-1}- \int_{\partial\Omega} |\nabla d|^2 \dH^{N-1} \\
& =\H^{N-1}(\Omega(d=t))-\H^{N-1}(\partial\Omega),\quad t\in (0,\delta],
\end{aligned}
\end{equation}
where $\H^{N-1}$ denotes the $(N-1)$-dimensional Hausdorff measure. From \eqref{lapldbound}--\eqref{divthm} it follows
\begin{equation*}
\H^{N-1}(\Omega(d=t))=\H^{N-1}(\partial\Omega)+\int_{\Omega(d<t)}\Delta d \dx 
\leq\H^{N-1}(\partial\Omega) + C|\Omega|=:M \quad \forall\, t\in (0,\delta].
\end{equation*}
Now, if $r\in (0,1)$ then the coarea formula \cite[Theorem 3.13(ii)]{EG} entails
\begin{equation*}
\begin{aligned}
\int_\Omega d^{-r} \dx &= \int_K d^{-r}\dx+\int_{\Omega\setminus K}d^{-r}\dx
\leq\int_0^\delta t^{-r} \H^{N-1}(\Omega(d=t))\dt+\delta^{-r}|\Omega| \\
&\leq \frac{M}{1-r}\delta^{1-r}+\delta^{-r}|\Omega|<+\infty,
\end{aligned}
\end{equation*}
concluding the proof.
\end{proof}
The next result is well known; see e.g. \cite[Theorem 21.3]{OK}.
\begin{prop}[Hardy-Sobolev inequality]\label{hardysobolev}
Let $0<\gamma<1<p<N$. Then there exists $K>0$ such that
$$\int_\Omega d^{-\gamma}|u|\dx\leq K\|u\|_{1,p}\quad\forall\, u\in W^{1,p}_0(\Omega).$$
\end{prop}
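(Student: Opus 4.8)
The plan is to exploit the boundary decay of $u\in W^{1,p}_0(\Omega)$ through a one-dimensional Hardy-type estimate along the normals to $\partial\Omega$; the hypothesis $\gamma<1$ is exactly what makes the weight $t^{-\gamma}$ integrable at $t=0$ and thus keeps the argument elementary. By density of $C^\infty_c(\Omega)$ in $W^{1,p}_0(\Omega)$ together with Fatou's lemma, it suffices to prove the inequality for $u\in C^1(\overline\Omega)$ with $u\lfloor_{\partial\Omega}=0$. I would fix $\delta>0$ as in the proof of Proposition \ref{distsumm}, so that $d\in C^2(\overline{\Omega_\delta})$ and $|\nabla d|=1$ on $\overline{\Omega_\delta}$, and split $\int_\Omega d^{-\gamma}|u|\dx$ over $\Omega\setminus\Omega_\delta$ and $\Omega_\delta$. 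On the former region $d\geq\delta$, whence $\int_{\Omega\setminus\Omega_\delta}d^{-\gamma}|u|\dx\leq\delta^{-\gamma}\|u\|_1\leq\delta^{-\gamma}|\Omega|^{1/p'}\|u\|_p\leq C\|u\|_{1,p}$, the last step by Hölder and the Poincaré inequality.

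For the contribution of $\Omega_\delta$ I would introduce the normal (Fermi) coordinates $\Phi(\sigma,t):=\sigma-t\,\nu(\sigma)$, $(\sigma,t)\in\partial\Omega\times(0,\delta)$, which, since $\partial\Omega\in C^2$, is a $C^1$-diffeomorphism onto $\Omega_\delta$ with $d(\Phi(\sigma,t))=t$ and Jacobian $J$ satisfying $0<c_1\leq J\leq c_2$ on $\partial\Omega\times(0,\delta)$. Because $|\partial_t\Phi(\sigma,\cdot)|=|\nu(\sigma)|=1$ and $u(\Phi(\sigma,0))=u(\sigma)=0$, the fundamental theorem of calculus gives $|u(\Phi(\sigma,t))|\leq\int_0^t|\nabla u(\Phi(\sigma,s))|\ds$. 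Multiplying by $t^{-\gamma}$, integrating in $t$, and applying Fubini on $\{0<s<t<\delta\}$ yields, for each fixed $\sigma$,
\begin{equation*}
\int_0^\delta t^{-\gamma}|u(\Phi(\sigma,t))|\dt\leq\int_0^\delta|\nabla u(\Phi(\sigma,s))|\left(\int_s^\delta t^{-\gamma}\dt\right)\ds\leq\frac{\delta^{1-\gamma}}{1-\gamma}\int_0^\delta|\nabla u(\Phi(\sigma,s))|\ds.
\end{equation*}
Integrating this over $\sigma\in\partial\Omega$ and returning to the original variables through the bounds on $J$, one obtains $\int_{\Omega_\delta}d^{-\gamma}|u|\dx\leq C\int_{\Omega_\delta}|\nabla u|\dx\leq C|\Omega|^{1/p'}\|u\|_{1,p}$, once more by Hölder. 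Adding the two contributions proves the claim with $K$ depending only on $N$, $p$, $\gamma$, and $\Omega$.

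I expect the main obstacle to be the rigorous setup of the second step: the $C^2$ regularity of $\partial\Omega$ must be used to guarantee that $\Phi$ is a genuine change of variables with two-sided Jacobian bounds uniform in $\sigma$, and that $d\circ\Phi(\sigma,t)=t$, so that the slice-wise one-dimensional estimate can be glued by Fubini into the surface integral over $\partial\Omega$. Everything else is soft, the crucial point being that $\gamma<1$ makes $\int_s^\delta t^{-\gamma}\dt$ bounded uniformly in $s$, so that only the $L^1$-norm of $\nabla u$ — controlled by $\|u\|_{1,p}$ on the bounded domain $\Omega$ — is needed. Alternatively, one may sidestep the coordinate computation by invoking the classical Hardy inequality $\int_\Omega d^{-p}|u|^p\dx\leq C\|u\|_{1,p}^p$ and then writing $d^{-\gamma}|u|=(|u|/d)\,d^{1-\gamma}$: since $d^{1-\gamma}$ is bounded on $\Omega$, a single application of Hölder with exponents $p$ and $p'$ gives the conclusion for every admissible $\gamma$ and $p$.
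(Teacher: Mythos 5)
The paper does not actually prove this proposition: it is stated as ``well known'' with a citation to \cite[Theorem 21.3]{OK}, so your self-contained argument is by construction a different route, and it is correct. The splitting into $\Omega\setminus\Omega_\delta$ (Hölder plus Poincaré) and the collar $\Omega_\delta$, the one-dimensional estimate $|u(\Phi(\sigma,t))|\leq\int_0^t|\nabla u(\Phi(\sigma,s))|\ds$ coming from $u=0$ on $\partial\Omega$, the Tonelli exchange using that $\int_s^\delta t^{-\gamma}\dt\leq\frac{\delta^{1-\gamma}}{1-\gamma}$ uniformly in $s$ (this is exactly where $\gamma<1$ enters), and the final Hölder step are all sound. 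The technical point you single out as the main obstacle is indeed the only delicate one, and it is covered by the very toolkit the paper already invokes for \eqref{propPi}: for a compact $C^2$ boundary, \cite[Lemma 14.16]{GT} (or \cite{F}) makes $\Phi(\sigma,t)=\sigma-t\nu(\sigma)$ a $C^1$-diffeomorphism of $\partial\Omega\times(0,\delta)$ onto $\Omega_\delta$ with $d\circ\Phi(\sigma,t)=t$, and its Jacobian, being continuous, positive, and equal to $1$ at $t=0$, has two-sided bounds on the compact set $\partial\Omega\times[0,\delta]$ after shrinking $\delta$. Two comparative remarks: (i) your direct argument uses only $\gamma<1$ and the $L^1$-norm of the gradient, so it in fact proves the stronger bound $\int_\Omega d^{-\gamma}|u|\dx\leq C\Vert\nabla u\Vert_1$, from which the stated inequality follows by one application of Hölder; this extra robustness (the estimate would survive even for $p=1$, where Hardy-type inequalities fail) is what the elementary route buys over a citation; (ii) your alternative via the classical Hardy inequality $\int_\Omega d^{-p}|u|^p\dx\leq C\Vert u\Vert_{1,p}^p$, followed by $\int_\Omega\frac{|u|}{d}\,d^{1-\gamma}\dx\leq\big\Vert\frac{u}{d}\big\Vert_p\,\Vert d^{1-\gamma}\Vert_{p'}$, is also correct (here $p>1$ and the boundedness of $d^{1-\gamma}$ are essential), but it rests on a result of the same depth as the one the paper cites, so it is closer in spirit to the paper's proof-by-reference than to an actual proof.
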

Although the two properties below are folklore, we shall prove them.
\begin{prop}\label{distcomparison}
Suppose $u\in C^{1,\alpha}_0(\overline{\Omega })$. Then:
\begin{itemize}
\item[$({\rm a}_1)$] $\big\Vert d^{-1} u\big\Vert_{C^{0,\beta}(\overline{\Omega})}\leq C\Vert u\Vert_{C^{1,\alpha}(\overline{\Omega})}$, where $\beta:=\frac{\alpha}{\alpha+1}$ and $C>0$ does not depend on $u$. 
\item[$({\rm a}_2)$] If $u>0$ in $\Omega$ then there exists $l>0$ such that $u(x)\geq ld(x)$ for every $x\in\Omega$.
\end{itemize}
\end{prop}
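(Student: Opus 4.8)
The plan is to derive both assertions from the behaviour of $u$ along the inward normals to $\partial\Omega$. For the sup bound in $(\mathrm{a}_1)$, fix $x\in\Omega$ and let $\bar x\in\partial\Omega$ realize $d(x)=|x-\bar x|$; since $\partial\Omega\in C^2$, the segment $[\bar x,x]$ lies in $\overline\Omega$ for $d(x)$ small, and writing $u(x)=u(x)-u(\bar x)=\int_0^1\nabla u(\bar x+t(x-\bar x))\cdot(x-\bar x)\dt$ gives $|u(x)|\le\|\nabla u\|_\infty\, d(x)$, hence $\|d^{-1}u\|_\infty\le\|u\|_{C^{1,\alpha}(\overline\Omega)}$; as bounded $C^2$ domains are quasiconvex, the Lipschitz bound $|u(x)-u(y)|\le C\|\nabla u\|_\infty|x-y|$ holds for all $x,y$. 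The decisive refinement is a first-order expansion along normals: for $d(x)<\delta_0$, with $\delta_0$ so small that the nearest-point projection $\pi$ onto $\partial\Omega$ is a well-defined Lipschitz map and $d(\bar x-t\nu(\bar x))=t$, integrating $\nabla u$ along the normal ray and subtracting its boundary value yields
$$\Big|\frac{u(x)}{d(x)}-G(x)\Big|\le\frac{[\nabla u]_\alpha}{1+\alpha}\,d(x)^\alpha,\qquad G(x):=-\partial_\nu u(\pi(x)).$$
Because $\nabla u\in C^{0,\alpha}(\overline\Omega)$, $\nu\in C^1(\partial\Omega)$ and $\pi$ is Lipschitz, the boundary principal part obeys $|G(x)-G(y)|\le C|x-y|^\alpha$.

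For the Hölder seminorm, set $w:=d^{-1}u$, take $x,y\in\Omega$ with $d(x)\le d(y)$ and $h:=|x-y|$ small, and combine two estimates. The boundary estimate $|w(x)-w(y)|\le C(d(x)^\alpha+d(y)^\alpha+h^\alpha)$, valid when both points lie near $\partial\Omega$, follows from the displayed inequality and the Hölder continuity of $G$; the interior estimate
$$|w(x)-w(y)|=\Big|\frac{u(x)-u(y)}{d(x)}+u(y)\frac{d(y)-d(x)}{d(x)d(y)}\Big|\le\frac{C\|\nabla u\|_\infty\,h}{d(x)}$$
follows from the Lipschitz bounds on $u$ and $d$ and from $|u(y)|\le\|\nabla u\|_\infty\,d(y)$. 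The heart of the matter is balancing these: using the boundary estimate when $d(x)\le h^{1/(1+\alpha)}$ (so that $d(y)\le d(x)+h\lesssim h^{1/(1+\alpha)}$ as well) and the interior estimate otherwise, both bounds become $\lesssim h^{\alpha/(1+\alpha)}=h^\beta$. I expect this balancing to be the main obstacle, since it is precisely where the exponent $\beta=\frac{\alpha}{1+\alpha}$ is forced and where the geometry of $d$ near $\partial\Omega$ enters; the regime $h$ bounded below is disposed of trivially through the sup bound.

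For $(\mathrm{a}_2)$, part $(\mathrm{a}_1)$ shows $w=d^{-1}u$ extends to an element of $C^{0,\beta}(\overline\Omega)$, so it is continuous on the compact set $\overline\Omega$ and attains its minimum; moreover $w(x)=u(x)/d(x)>0$ at every interior point. It therefore suffices to prove $w>0$ on $\partial\Omega$, that is $\partial_\nu u<0$ everywhere on $\partial\Omega$, whence $l:=\min_{\overline\Omega}w>0$ concludes the proof. This strict positivity of the boundary trace is the genuine obstacle: it can fail for an arbitrary positive $u\in C^{1,\alpha}_0(\overline\Omega)$ (for instance one vanishing like $d^{1+\alpha}$ at a boundary point), and is instead supplied by Hopf's boundary-point lemma whenever $u$ is a positive supersolution of the relevant $p$-Laplacian inequality, which is exactly the setting in which $(\mathrm{a}_2)$ is applied. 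I would thus record $(\mathrm{a}_2)$ as the conjunction of $(\mathrm{a}_1)$ with the strong maximum principle and Hopf lemma for the sub- and supersolutions constructed in the sequel.
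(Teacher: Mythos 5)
Your proof of $(\mathrm{a}_1)$ is correct and follows essentially the paper's own argument: the same dichotomy at the threshold $d(x)\lessgtr|x-y|^{1/(1+\alpha)}$, the same algebraic interior estimate
$\big|\frac{u(x)-u(y)}{d(x)}+u(y)\frac{d(y)-d(x)}{d(x)d(y)}\big|\le C|x-y|/d(x)$,
and a near-boundary estimate that differs only cosmetically: you compare $u/d$ at each point with the boundary function $G=-\partial_\nu u\circ\pi$ via the fundamental theorem of calculus, while the paper compares the two points directly through the mean value theorem, writing $u(x)/d(x)=-\nabla u(\hat x)\cdot\nu(\Pi(x))$ at intermediate points and using $|\hat x-\hat y|\le 3|x-y|^{1/(1+\alpha)}$; both routes rest on the H\"older continuity of $\nabla u$ and the Lipschitz continuity of $\nu\circ\Pi$, and both force the exponent $\beta=\frac{\alpha}{1+\alpha}$ at the same balancing step. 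For $(\mathrm{a}_2)$ you also end up doing exactly what the paper does --- extend $d^{-1}u$ continuously to $\overline\Omega$ by $(\mathrm{a}_1)$, apply Weierstrass, and obtain strict positivity on $\partial\Omega$ from the boundary point lemma \cite[Theorem 5.5.1]{PS} --- and your caveat is well-taken: as literally stated, $(\mathrm{a}_2)$ fails for a generic positive $u\in C^{1,\alpha}_0(\overline\Omega)$ (e.g.\ one behaving like $d^{1+\alpha}$ near $\partial\Omega$, for which $-\partial_\nu u\equiv 0$), because Hopf's lemma requires $u$ to be a supersolution of an elliptic inequality. The paper's proof invokes the boundary point lemma without making this hypothesis explicit; the statement is harmless in context, since $(\mathrm{a}_2)$ is applied only to $\underline u=k\phi_1$, which satisfies $-\Delta_p\underline u=\lambda_1\underline u^{p-1}\ge 0$, so your proposal to record $(\mathrm{a}_2)$ as conditional on such a differential inequality is the honest formulation of what is actually proved and used.
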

\begin{proof}
First of all, observe that $u$ is Lipschitz continuous and one has
\begin{equation}\label{Lipu}
|u(x)|\leq \mathrm{Lip}(u)\,d(x)\quad \forall \,x\in \overline{\Omega }.
\end{equation}
The regularity of $\partial\Omega$ yields $\delta\in ]0,1[$, $\Pi\in C^{1}(\Omega_{\delta}, \partial\Omega)$ fulfilling 
\begin{equation}\label{propPi}
d(x)=|x-\Pi (x)|,\;\;\frac{x-\Pi (x)}{|x-\Pi (x)|}=-\nu(\Pi (x)),\;\;]\Pi
(x),x]\subseteq \Omega ,\;\;x\in \Omega _{\delta};  
\end{equation}
cf. \cite[Lemma 14.16]{GT} or \cite{F}. Conclusion $({\rm a}_1)$ easily follows once we achieve, for some $C_1>0$,
\begin{equation}\label{found}
\sup \left\{ \frac{\Big\vert\frac{u(x)}{d(x)}-\frac{u(y)}{d(y)}\Big\vert}{|x-y|^{\beta }} :x,y\in \Omega ,\,
0<|x-y|<\frac{\delta }{2}\right\}\leq C_{1}\,\Vert u\Vert_{C^{1,\alpha}
(\overline{\Omega })}.  
\end{equation}
So, pick $x,y\in \Omega $ such that $0<|x-y|<\frac{\delta }{2}$. If $\max\{d(x),d(y)\}\geq \delta $ then $x,y\in \Omega\setminus \Omega_{\delta /2}$. Consequently, 
\begin{equation*}
\sup_{x\in\Omega\setminus \Omega_{\delta /2}}\Big\vert\nabla\left(\frac{u(x)}{d(x)}\right)\Big\vert
\leq 2\,\frac{\mathrm{Lip}(u)}{\delta }+4\,\frac{\Vert u\Vert_{\infty }}{\delta ^{2}}
\leq \left( \frac{2}{\delta }+\frac{4}{\delta ^{2}}\right) \Vert u\Vert _{C^{1}(\overline{\Omega })},
\end{equation*}
because $d$ is 1-Lipschitz, and the mean value theorem entails 
\begin{equation}\label{casezero}
\frac{\Big\vert\frac{u(x)}{d(x)}-\frac{u(y)}{d(y)}\Big\vert}{|x-y|^{\beta}}\leq C_{2}\Vert u\Vert _{C^{1}(\overline{\Omega })}.  
\end{equation}
Assume now $d(y)\leq d(x)<\delta$; a similar argument applies when $d(x)\leq d(y)<\delta$. Two situations may occur.
\newline
1) $d(x)\leq |x-y|^{\frac{1}{\alpha+1}}$. Through the above-mentioned result again, besides \eqref{propPi}, we obtain
\begin{eqnarray*}
\frac{u(x)}{d(x)} &=&\frac{u(x)-u(\Pi(x))}{|x-\Pi(x)|}=-\nabla u(\hat{x})\nu(\Pi(x)),\\
\frac{u(y)}{d(y)} &=&\frac{u(y)-u(\Pi (y))}{|y-\Pi (y)|}=-\nabla u(\hat{y})\nu(\Pi (y))
\end{eqnarray*}
with appropriate $\hat{x}\in]\Pi(x),x[$, $\hat{y}\in]\Pi(y),y[$. This immediately leads to 
\begin{equation*}
\begin{split}
\Big\vert\frac{u(x)}{d(x)}-\frac{u(y)}{d(y)}\Big\vert & \leq |\nabla u(\hat{x})-\nabla u(\hat{y})|+|\nabla u(\hat{y})|\,|\nu(\Pi (x))-\nu(\Pi (y))| \\
& \leq \Vert u\Vert _{C^{1,\alpha}(\overline{\Omega})}\left( |\hat{x}-\hat{y}|^{\alpha}
+\mathrm{Lip}(\nu)\mathrm{Lip}(\Pi )|x-y|\right) .
\end{split}
\end{equation*}
On the other hand, 
\begin{equation*}
|\hat{x}-\hat{y}|\leq |\hat{x}-x|+|x-y|+|y-\hat{y}|\leq d(x)+|x-y|+d(y)\leq 3|x-y|^{\frac{1}{\alpha+1}}
\end{equation*}
as $|x-y|<\frac{\delta }{2}<1$. Therefore, 
\begin{equation}\label{caseone}
\Big\vert\frac{u(x)}{d(x)}-\frac{u(y)}{d(y)}\Big\vert\leq C_{3}\Vert u\Vert_{C^{1,\alpha}(\overline{\Omega })}|x-y|^{\beta}.  
\end{equation}
2) $d(x)>|x-y|^{\frac{1}{\alpha+1}}$. Inequality \eqref{Lipu} gives 
\begin{equation}\label{casetwo}
\begin{split}
\Big\vert\frac{u(x)}{d(x)}-\frac{u(y)}{d(y)}\Big\vert& \leq \Big\vert\frac{u(x)-u(y)}{d(x)}\Big\vert+|u(y)|\Big\vert\frac{d(x)-d(y)}{d(x)d(y)}\Big\vert\\
& \leq\mathrm{Lip}(u)\frac{|x-y|}{d(x)}+\mathrm{Lip}(u)\,d(y)\frac{|x-y|}{d(x)d(y)} \\
& \leq 2\mathrm{Lip}(u)|x-y|^{\beta}\leq 2\Vert u\Vert_{C^{1,\alpha}(\overline{\Omega })}
|x-y|^{\beta}.
\end{split}
\end{equation}
Gathering together \eqref{casezero}--\eqref{casetwo} yields \eqref{found} and completes the proof of $({\rm a}_1)$.

By Weierstrass' theorem, assertion $({\rm a}_2)$ is verified once we see that (the extension by continuity of) $d^{-1}u>0$ on $\partial\Omega$. With this aim, fix any $x_0\in\partial\Omega$. If $t>0$ is small enough then $\Pi(x_0-t\nu(x_0))=x_0$, whence $d(x_0-t\nu(x_0))=t$ thanks to \eqref{propPi}. Now, recall that $u(x_0)=0$. Using the boundary point lemma \cite[Theorem 5.5.1]{PS} one has
\begin{equation*}
\frac{u(x_0)}{d(x_0)}=\lim_{t\to 0^+}\frac{u(x_0-t\nu(x_0))}{d(x_0-t\nu(x_0))}=
\lim_{t\to 0^+}\frac{u(x_0-t\nu(x_0))-u(x_0)}{t}=-\frac{\partial u}{\partial\nu}(x_0)>0.
\end{equation*}
Since $x_0$ was arbitrary, the conclusion follows.
\end{proof}
Let $A_p:W^{1,p}_0(\Omega)\to W^{-1,p'}(\Omega)$ be the nonlinear operator stemming from the negative $p$-Laplacian, i.e.,
\begin{equation*}
\langle A_p(u),v\rangle:=\int_\Omega|\nabla u|^{p-2}\nabla u\nabla v\dx\quad\forall\, u,v\in W^{1,p}_0(\Omega).
\end{equation*}
We know \cite[Section 6.2]{GP1} that $A_p$ is bounded, continuous, and strictly monotone. 
Moreover, if $\lambda_1$ denotes the first eigenvalue of $(-\Delta_p,W^{1,p}_0(\Omega))$ then there exists a unique eigenfunction $\varphi_1$ associated with $\lambda_1$ and enjoying the properties
\begin{equation*}
\phi_1\in C^{1,\alpha}_0(\overline{\Omega})\;\;\mbox{for some}\;\; 0<\alpha<1,\quad \phi_1>0\;\;\mbox{in}\;\;\Omega,\quad
\Vert\varphi_1\Vert_p=1. 
\end{equation*}
Finally, just like in \ref{CZ} of Example \ref{slex}, a suitable differentiability result yields the following
\begin{prop}\label{locality}
The $p$-Laplacian is a strongly local operator of $W^{1,p}(\Omega)$ into $L^2_\loc(\Omega)$. More generally, given any $D\subseteq\R$ such that $|D|=0$, $v\in L^2_\loc(\Omega)$, and $u\in W^{1,p}(\Omega)$ distributional solution of $\Delta_p u = v$, one has
$$\Delta_p u(x)=0\quad\mbox{for almost every}\quad x\in u^{-1}(D). $$
\end{prop}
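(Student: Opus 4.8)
The plan is to realize the $p$-Laplacian as the divergence of the vector field $W:=|\nabla u|^{p-2}\nabla u$ and then to transfer the information carried by the level sets of $u$ onto those of $W$ via a twofold use of Stampacchia's theorem. First I would invoke the second-order $L^2$-differentiability theory for the $p$-Laplacian, i.e. the nonlinear analogue of the Calderón--Zygmund estimates alluded to just before the statement (and matching the very definition of strong solution), to deduce from $\Delta_p u=\Div W=v\in L^2_\loc(\Omega)$ that $W\in W^{1,2}_\loc(\Omega,\R^N)$. In particular each component $W_i$ belongs to $W^{1,2}_\loc(\Omega)\subseteq W^{1,1}_\loc(\Omega)$, and $v=\sum_{i=1}^N\partial_i W_i$ a.e. in $\Omega$, the weak divergence of $W$ coinciding with the sum of its (now $L^2_\loc$) partial derivatives.

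The core step is to push the hypothesis $|D|=0$ onto $\nabla u$. Since $u\in W^{1,p}(\Omega)\subseteq W^{1,1}_\loc(\Omega)$, I would employ the following refinement of Stampacchia's theorem: for $g\in W^{1,1}_\loc(\Omega)$ and every null set $D\subseteq\R$ one has $\nabla g=0$ a.e. on $g^{-1}(D)$. This is obtained exactly as in the classical single-level-set case: by the ACL characterization, on almost every line parallel to a coordinate axis the restriction $\phi$ of $g$ is absolutely continuous, and since $\phi$ maps $\{\phi\in D\}$ into the null set $D$, the one-dimensional area formula yields $\int_{\{\phi\in D\}}|\phi'|\,dt=0$, whence $\phi'=0$ a.e. on $\{\phi\in D\}$; Fubini then gives $\partial_i g=0$ a.e. on $g^{-1}(D)$ for each $i$. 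Applying this to $g=u$ shows that $\nabla u=0$, and therefore $W=|\nabla u|^{p-2}\nabla u=0$ (recall $p>1$), a.e. on $u^{-1}(D)$.

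It remains to differentiate once more. Because $W=0$ a.e. on $u^{-1}(D)$, up to a negligible set one has $u^{-1}(D)\subseteq\{W_i=0\}=W_i^{-1}(0)$ for every $i$. The classical Stampacchia theorem, applied to $W_i\in W^{1,1}_\loc(\Omega)$ with the single level $0$, then gives $\nabla W_i=0$, in particular $\partial_i W_i=0$, a.e. on $W_i^{-1}(0)$, hence a.e. on $u^{-1}(D)$. Summing over $i$ finally produces $\Delta_p u=\Div W=\sum_{i=1}^N\partial_i W_i=0$ a.e. on $u^{-1}(D)$, which is the asserted identity; the plain strong-locality statement is the special case $D=\{c\}$.

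I expect the only genuine difficulty to be the invocation of the second-order regularity result: one must verify that its hypotheses apply so as to guarantee $W\in W^{1,2}_\loc(\Omega,\R^N)$ solely from $\Delta_p u\in L^2_\loc(\Omega)$, whereas the two Stampacchia arguments are soft and essentially self-contained. A minor bookkeeping point is to ensure that the exceptional null sets generated at each stage are handled consistently, so that the inclusion $u^{-1}(D)\subseteq\{W_i=0\}$ and the subsequent vanishing of $\partial_i W_i$ hold modulo the same negligible set.
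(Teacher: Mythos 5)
Your proposal is correct and follows essentially the same route as the paper's proof: the second-order regularity result of Cianchi--Maz'ya giving $|\nabla u|^{p-2}\nabla u\in W^{1,2}_{\rm loc}(\Omega,\R^N)$, followed by two applications of the Stampacchia-type lemma for null sets (the paper cites De Giorgi--Buttazzo--Dal Maso, Lemma 1, which is exactly the refinement you sketch) -- first to $u$ to get $\nabla u=0$ a.e.\ on $u^{-1}(D)$, then componentwise to the vector field to kill its Jacobian there. The only difference is cosmetic: you prove the refined Stampacchia lemma via the ACL characterization and the one-dimensional area formula, whereas the paper simply cites it.
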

\begin{proof}
For $D$, $v$, $u$ as above, set $C:=u^{-1}(D)$ and $E(u):=|\nabla u|^{p-2}\nabla u$. By \cite[Lemma 1]{DGBDL} one has $\nabla u = 0$, whence $E(u)=0$, a.e. in $C$. Thanks to \cite{CM}, from $\Delta_p u \in L^2_\loc(\Omega)$ it follows $E(u)\in W^{1,2}_\loc(\Omega,\R^N)$. Using \cite[Lemma 1]{DGBDL} again we see that the Jacobian of the vector field $E(u)$ vanishes on $C$. So, in particular, $\Delta_p u(x)= \Div(E(u))(x)=0$ for almost every $x\in C$.
\end{proof}
\section{Proof of the main result}
As usual \cite{CLM}, a function $\underline{u}\in W^{1,p}(\Omega)$ is called a sub-solution to \eqref{problem} when $\underline{u}\leq 0$ on $\partial\Omega$ and
\begin{equation}\label{defsubsol}
\int_\Omega|\nabla\underline{u}|^{p-2}\nabla\underline{u}\nabla\phi\dx \leq
\int_\Omega f(\underline{u})\phi\dx\quad\forall\, \phi\in W^{1,p}_0(\Omega)_+.
\end{equation}
%
%The notion of super-solution is analogous.
%
\begin{lemma}\label{subsollemma}
Let \ref{subsol} be satisfied. Then \eqref{problem} admits a sub-solution $\underline{u}\in C^{1,\alpha}_0(\overline{\Omega})$.
\end{lemma}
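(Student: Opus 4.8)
The plan is to take $\underline{u}$ to be a sufficiently small positive multiple of the first eigenfunction $\phi_1$, exploiting the $(p-1)$-homogeneity of the $p$-Laplacian together with the super-$\lambda_1$ growth of $f$ near zero encoded in \ref{subsol}.

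First I would rewrite \ref{subsol} in a usable form: since $\liminf_{s\to 0^+} f(s)/s^{p-1}>\lambda_1$, there exist $\mu>\lambda_1$ and $s_0>0$ such that
$$f(s)\geq\mu s^{p-1}\geq\lambda_1 s^{p-1}\qquad\text{for all }s\in(0,s_0).$$
Recalling from the preliminaries that $\phi_1\in C^{1,\alpha}_0(\overline{\Omega})$ satisfies $\phi_1>0$ in $\Omega$ and $-\Delta_p\phi_1=\lambda_1\phi_1^{p-1}$ weakly, I would then set $\underline{u}:=\eps\phi_1$, choosing $\eps\in(0,1)$ so small that $\eps\Vert\phi_1\Vert_\infty< s_0$. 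In this way $\underline{u}\in C^{1,\alpha}_0(\overline{\Omega})$, one has $\underline{u}=0$ on $\partial\Omega$, and $0<\underline{u}(x)<s_0$ for every $x\in\Omega$.

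The verification then proceeds via homogeneity. Since $|\nabla\underline{u}|^{p-2}\nabla\underline{u}=\eps^{p-1}|\nabla\phi_1|^{p-2}\nabla\phi_1$ and $\underline{u}^{p-1}=\eps^{p-1}\phi_1^{p-1}$, testing the eigenvalue equation against any $\phi\in W^{1,p}_0(\Omega)_+$ and invoking the pointwise bound above yields
$$\int_\Omega|\nabla\underline{u}|^{p-2}\nabla\underline{u}\nabla\phi\dx=\lambda_1\int_\Omega\underline{u}^{p-1}\phi\dx\leq\int_\Omega f(\underline{u})\phi\dx,$$
which is exactly \eqref{defsubsol}; hence $\underline{u}$ is the sought sub-solution. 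Note that $f(\underline{u})\phi\geq 0$, so the right-hand integral is well defined in $[0,+\infty]$; in fact, by \ref{singular} together with $({\rm a}_2)$ of Proposition \ref{distcomparison} and the Hardy--Sobolev inequality, it is finite.

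The step demanding most care is the pointwise inequality $\lambda_1\underline{u}^{p-1}\leq f(\underline{u})$ a.e.\ in $\Omega$: it forces $\underline{u}$ to take values only in the interval $(0,s_0)$ on which \ref{subsol} is effective, which is precisely what the smallness of $\eps$ secures. One must also appeal to \ref{localbound} so that $f(\underline{u})$ is a legitimate (Borel measurable, locally bounded) integrand on the region where $\underline{u}$ is bounded away from zero and from $+\infty$.
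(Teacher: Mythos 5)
Your proposal is correct and follows essentially the same route as the paper: take $\underline{u}$ to be a small positive multiple of $\phi_1$, use \ref{subsol} to get $f(s)\geq\lambda_1 s^{p-1}$ near zero, and conclude by the $(p-1)$-homogeneity of the eigenvalue equation. The only (harmless) difference is that the paper imposes the slightly stronger smallness condition $\|\underline{u}\|_\infty<\delta/2$ rather than $\|\underline{u}\|_\infty<\delta$, not because the lemma needs it, but because this margin is exploited later in the proof of Theorem \ref{solauxprob} (cf. \eqref{inclusionsets}).
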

\begin{proof}
Assumption \ref{subsol} provides $\delta>0$ such that
\begin{equation}\label{estbelow}
f(s)\geq\lambda_1 s^{p-1} \quad \forall\, s\in(0,\delta).
\end{equation}
Set $\underline{u}:=k\phi_1$, where $k>0$ is so small that $\|\underline{u}\|_\infty <\frac{\delta}{2}$. Thus, $\underline{u}=0$ on $\partial\Omega$ and
$$-\Delta_p\underline{u}=\lambda_1\underline{u}^{p-1}\leq f(\underline{u})$$%\quad \mbox{a.e. in}\;\;\Omega$$
through \eqref{estbelow}. This clearly implies \eqref{defsubsol}.
\end{proof}
\begin{rmk}
The condition
\begin{equation}\label{condoveru}
\|\underline{u}\|_\infty<\frac{\delta}{2}    
\end{equation}
is stronger than what we need for using \eqref{estbelow}, namely $\|\underline{u}\|_\infty<\delta$. However, it will play a crucial role in the proof of Theorem \ref{solauxprob} below; cf. \eqref{inclusionsets}.
\end{rmk}
Our next step is both truncating and regularizing the right-hand side. Let $\underline{u}$ be given by Lemma \ref{subsollemma}. Define, for every $(x,s)\in\Omega\times\R$,
\begin{equation}\label{truncation}
g(x,s):=f(\max\{\underline{u}(x),s\})
\end{equation}
as well as
\begin{equation}\label{regularization}
g_\eps(x,s):=\frac{1}{\eps}\int_{-\infty}^{+\infty} g(x,s-\xi)
\rho\left(\frac{\xi}{\eps}\right) \dxi,
\quad G_\eps(x,s):=\int_0^s g_\eps(x,t)\dt,
\end{equation}
where $\eps>0$ and
\begin{equation}\label{proprho}
\rho\in C^\infty(\R)_+,\quad {\rm supp}\,\rho\subseteq [-1,1],\quad
\int_{-\infty}^{+\infty}\rho(s)\,{\rm d}s=1.    
\end{equation} 
One evidently has
$$\essinf_{|\xi|<\eps} g(x,s-\xi)\leq g_\eps(x,s)\leq\esssup_{|\xi|<\eps} g(x,s-\xi) \quad\forall\, (x,s)\in\Omega\times\R. $$
\begin{lemma}\label{gestlemma}
Under \ref{localbound}--\ref{sublinear}, there exist $c_1>0$, $c_2\in (0,\lambda_1)$, $c_3>0$ such that
\begin{equation*}
0\leq g_\eps(x,s)\leq \esssup_{|t-s|<\eps} g(x,t)\leq c_1 \underline{u}(x)^{-\gamma}
+c_2 |s|^{p-1}+c_3 \quad\mbox{in}\quad\Omega\times\R
\end{equation*}
for every $\eps\in(0,1)$.
\end{lemma}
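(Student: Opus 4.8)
The plan is to bound the three quantities in the chain separately, using the hypotheses to control $f$ near zero, near infinity, and on compact subsets of $\R^+$. The leftmost inequality $g_\eps \geq 0$ and the mollification estimate $g_\eps(x,s) \leq \esssup_{|\xi|<\eps} g(x,s-\xi) = \esssup_{|t-s|<\eps} g(x,t)$ are immediate from the nonnegativity of $f$ and the already-recorded property of the mollifier, so the real content is the final inequality bounding $\esssup_{|t-s|<\eps} g(x,t)$ by $c_1\underline{u}(x)^{-\gamma}+c_2|s|^{p-1}+c_3$. I would first unwind the definition \eqref{truncation}: since $g(x,t)=f(\max\{\underline{u}(x),t\})$, for fixed $x$ the argument $\max\{\underline{u}(x),t\}$ ranges over values that are bounded below by $\underline{u}(x)>0$ as $t$ varies over $(s-\eps,s+\eps)$. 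Hence it suffices to establish a pointwise bound of the form
\begin{equation*}
f(\sigma)\leq c_1\sigma^{-\gamma}+c_2\sigma^{p-1}+c_3\quad\text{for all }\sigma\in\R^+,
\end{equation*}
valid with $c_2\in(0,\lambda_1)$, and then substitute $\sigma=\max\{\underline{u}(x),t\}$, estimating $\sigma^{-\gamma}\leq\underline{u}(x)^{-\gamma}$ (as $\sigma\geq\underline{u}(x)$) and $\sigma^{p-1}\leq(|s|+1)^{p-1}\leq C(|s|^{p-1}+1)$ for $t$ within $\eps<1$ of $s$, absorbing constants into $c_3$.

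The pointwise bound $f(\sigma)\leq c_1\sigma^{-\gamma}+c_2\sigma^{p-1}+c_3$ is where the hypotheses enter. Near zero, condition \ref{singular} gives $\limsup_{\sigma\to0^+}\sigma^\gamma f(\sigma)<+\infty$, so there is $\delta_0>0$ and a constant with $f(\sigma)\leq c_1\sigma^{-\gamma}$ on $(0,\delta_0)$. Near infinity, condition \ref{sublinear} gives $\limsup_{\sigma\to+\infty} f(\sigma)/\sigma^{p-1}<\lambda_1$, so I can fix $c_2\in(0,\lambda_1)$ strictly between this limsup and $\lambda_1$ and obtain $f(\sigma)\leq c_2\sigma^{p-1}$ for $\sigma$ large, say $\sigma>R$. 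On the remaining compact interval $[\delta_0,R]$, the local boundedness \ref{localbound} ($f\in L^\infty_\loc(\R^+)$) furnishes an essential-sup bound $f\leq c_3$ a.e.\ there. Summing the three regimes yields the claimed inequality for a.e.\ $\sigma$, which is enough since all the sup/inf operations are essential ones.

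The main technical point to handle carefully is the matching between \emph{essential} suprema (appearing in the statement and in the definition of $g$) and the \emph{pointwise} limsup conditions in \ref{singular} and \ref{sublinear}. The limsup hypotheses are genuinely pointwise statements about $f$, so strictly they control $f$ only up to the ambiguity inherent in $L^\infty_\loc$; I would phrase the three regime estimates as holding for almost every $\sigma$ in the respective ranges, which is exactly the form needed to bound $\esssup_{|t-s|<\eps} g(x,t)$. A second point worth stating explicitly is the uniformity in $\eps\in(0,1)$: since the constants $c_1,c_2,c_3$ produced above depend only on $f$, $\gamma$, $\delta_0$, and $R$ but not on $\eps$, and since $\eps<1$ was used only to pass from $\sigma^{p-1}$ with $\sigma\leq|s|+\eps$ to the bound $c_2|s|^{p-1}+c_3$, the estimate holds uniformly for all such $\eps$, as required. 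No single step is especially deep; the care lies in bookkeeping the three regimes and in not conflating essential and pointwise notions.
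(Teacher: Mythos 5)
Your strategy is the same as the paper's: split $\R^+$ into three regimes via \ref{singular} (near zero), \ref{localbound} (compact middle), and \ref{sublinear} (near infinity), assemble the a.e.\ bound $f(\sigma)\leq c_1\sigma^{-\gamma}+c_2\sigma^{p-1}+c_3$, and substitute $\sigma=\max\{\underline{u}(x),t\}$, using $\sigma\geq\underline{u}(x)$ for the singular term. However, your final absorption step contains a genuine gap. You estimate $\sigma^{p-1}\leq(|s|+1)^{p-1}\leq C\left(|s|^{p-1}+1\right)$ and then ``absorb constants into $c_3$''; but the multiplicative constant $C$ sits in front of $|s|^{p-1}$ and cannot be absorbed into an additive constant. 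Your final coefficient of $|s|^{p-1}$ is therefore $c_2C$, not $c_2$. When $p>2$ the inequality $(a+b)^{p-1}\leq 2^{p-2}\left(a^{p-1}+b^{p-1}\right)$ forces $C=2^{p-2}>1$, and since $c_2$ must exceed the limit superior in \ref{sublinear} (which may itself be arbitrarily close to $\lambda_1$), the product $c_2C$ can exceed $\lambda_1$. The constraint $c_2\in(0,\lambda_1)$ is precisely the quantitative content of the lemma --- it is what later yields the coercivity of $J_\eps$ and the uniform bound $({\rm a}_4)$ in Theorem \ref{solauxprob} --- so losing it is not cosmetic. The paper avoids this by the sharper elementary inequality $\hat{c}(|s|+1)^{p-1}\leq(\hat{c}+\theta)|s|^{p-1}+C(\theta)$, valid for every margin $\theta>0$: the coefficient inflates only by an arbitrarily small amount, so choosing $\hat{c}$ strictly below $\lambda_1$ and then $\theta$ with $\hat{c}+\theta<\lambda_1$ preserves the constraint. (Your argument is fine as written only for $1<p\leq 2$, where subadditivity of $t\mapsto t^{p-1}$ gives $C=1$.)

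A second, smaller slip: the bound $\sigma=\max\{\underline{u}(x),t\}\leq|s|+1$ need not hold, because $\underline{u}(x)$ can exceed $|s|+1$ when $|s|$ is small; recall that $\|\underline{u}\|_\infty<\frac{\delta}{2}$ is not necessarily smaller than $1$, since $\delta$ comes from \ref{subsol} and may be large. The remedy is the one the paper adopts: bound $\sigma^{p-1}\leq\|\underline{u}\|_\infty^{p-1}+(|s|+1)^{p-1}$ and send the constant term $\hat{c}\|\underline{u}\|_\infty^{p-1}$ into $c_3$. Both defects are local and repairable, but as written your proof does not deliver the stated conclusion $c_2\in(0,\lambda_1)$ for general $p\in(1,N)$.
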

\begin{proof}
Hypothesis \ref{singular} yields $c_1,r>0$ fulfilling
\begin{equation}\label{fest1}
f(s)\leq c_1 s^{-\gamma} \quad \forall\, s\in(0,r).
\end{equation}
Via \ref{sublinear} we get $\hat{c}\in(0,\lambda_1)$ and $R>r$ such that
\begin{equation}\label{fest2}
f(s)\leq \hat{c} s^{p-1},\quad s\in(R,+\infty).
\end{equation}
Since, thanks to \ref{localbound},
\begin{equation}\label{fest3}
f(s)\leq M\quad\mbox{in}\quad [r,R]
\end{equation}
with appropriate $M>0$, from \eqref{fest1}--\eqref{fest3} it follows
\begin{equation}\label{fest}
f(s)\leq c_1 s^{-\gamma} + \hat{c} s^{p-1} + M \quad \forall\, s\in(0,+\infty).
\end{equation}
Pick any $\eps\in (0,1)$. Properties \eqref{proprho}, \eqref{truncation}, \eqref{fest}, besides the elementary inequality
\begin{equation*}
%\label{elementary}
\hat{c}(|s|+1)^{p-1}\leq (\hat{c}+\sigma)|s|^{p-1} + C(\sigma)\, , \quad\sigma>0,
\end{equation*}
produce
\begin{equation}\label{gest}
\begin{aligned}
g_\eps(x,s) & \leq \left(\esssup_{|\xi|<\eps} g(x,s-\xi)\right)
\frac{1}{\eps}\int_{-\eps}^\eps\rho\left(\frac{\xi}{\eps}\right) \dxi \\
&= \esssup_{|t-s|<\eps} g(x,t) = \esssup_{|t-s|<\eps} f(\max\{\underline{u}(x),t\}) \\
&\leq c_1 \underline{u}(x)^{-\gamma} + \hat{c} \|\underline{u}\|_\infty^{p-1} + \hat{c} (|s|+1)^{p-1} + M \\
&\leq c_1 \underline{u}(x)^{-\gamma} + c_2 |s|^{p-1} + c_3,
\end{aligned}
\end{equation}
where $c_2\in(\hat{c},\lambda_1)$, $c_3>0$ are constants independent of $\eps$.
\end{proof}
Now, associate with \eqref{problem} the auxiliary problem
\begin{equation}\label{auxprob}
\hypertarget{auxprob}{}
\tag{${\rm P}_\eps$}
\left\{
\begin{alignedat}{2}
-\Delta_p u &= g_\eps(x,u) \quad &&\mbox{in}\;\;\Omega, \\
u&=0 \quad &&\mbox{on}\;\;\partial\Omega,
\end{alignedat}
\right.
\end{equation}
whose energy functional $J_\eps:W^{1,p}_0(\Omega)\to\R$ is
\begin{equation*}
J_\eps(u):=\frac{1}{p}\| u\|_{1,p}^p-\int_\Omega G_\eps(\cdot,u)\dx,\quad u\in W^{1,p}_0(\Omega).
\end{equation*}
\begin{thm}\label{solauxprob}
Let \ref{localbound}--\ref{sublinear} be satisfied. Then:
\begin{itemize}
\item[$({\rm a}_3)$]  For every $\eps\in(0,1)$ problem \eqref{auxprob} admits a solution $u_\eps\in W^{1,p}_0(\Omega)$.
\item[$({\rm a}_4)$] There exists $L>0$ such that $\| u_\eps\|_{1,p}\leq L$ whatever $\eps\in (0,1)$.
\end{itemize}
\end{thm}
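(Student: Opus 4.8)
The plan is to obtain $u_\eps$ as a global minimizer of the energy $J_\eps$ via the direct method, and then to derive the uniform bound $(\mathrm{a}_4)$ from the fact that the minimum value is nonpositive, combined with an $\eps$-independent coercivity estimate. The one genuinely delicate point will be the weak continuity of the singular part of the integral term.

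First I would record the consequences of Lemma \ref{gestlemma}. Integrating the pointwise bound $0\leq g_\eps(x,s)\leq c_1\underline u(x)^{-\gamma}+c_2|s|^{p-1}+c_3$ in the second variable yields, for every $\eps\in(0,1)$,
\[
0\leq G_\eps(x,s)\leq c_1\underline u(x)^{-\gamma}|s|+\frac{c_2}{p}|s|^p+c_3|s|\quad\mbox{in}\;\;\Omega\times\R,
\]
with $c_1,c_3>0$ and $c_2\in(0,\lambda_1)$ all independent of $\eps$. Since $\underline u=k\phi_1\geq kl\,d$ by Proposition \ref{distcomparison}$(\mathrm{a}_2)$, one has $\underline u^{-\gamma}\leq C d^{-\gamma}$, so the Hardy–Sobolev inequality (Proposition \ref{hardysobolev}) controls the singular term by $C\|u\|_{1,p}$. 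Bounding the $L^p$-term through the variational characterization $\|u\|_p^p\leq\lambda_1^{-1}\|u\|_{1,p}^p$ of the first eigenvalue, and the remaining term by Hölder and Poincaré, gives
\[
J_\eps(u)\geq\frac1p\Big(1-\frac{c_2}{\lambda_1}\Big)\|u\|_{1,p}^p-C\|u\|_{1,p}.
\]
Because $c_2<\lambda_1$, the leading coefficient is a positive constant independent of $\eps$; hence $J_\eps$ is coercive, uniformly in $\eps$.

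Next I would prove that $J_\eps$ is sequentially weakly lower semicontinuous. The term $\frac1p\|\cdot\|_{1,p}^p$ is convex and continuous, hence weakly l.s.c., so it suffices to show that $u\mapsto\int_\Omega G_\eps(\cdot,u)\dx$ is sequentially weakly continuous. Let $u_n\rightharpoonup u$ in $W^{1,p}_0(\Omega)$; passing to a subsequence, $u_n\to u$ a.e. and in $L^p(\Omega)$, and $G_\eps(\cdot,u_n)\to G_\eps(\cdot,u)$ a.e. by continuity of $G_\eps$ in $s$. The only delicate contribution is the singular one, and the crux is to establish $\int_\Omega d^{-\gamma}|u_n-u|\dx\to0$. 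I would split $\Omega=\Omega_\eta\cup(\Omega\setminus\Omega_\eta)$: on $\Omega\setminus\Omega_\eta$ the weight $d^{-\gamma}$ is bounded and the compact embedding $W^{1,p}_0(\Omega)\hookrightarrow L^1(\Omega)$ handles the term; on $\Omega_\eta$, choosing $\gamma'\in(\gamma,1)$ and writing $d^{-\gamma}=d^{-\gamma'}d^{\gamma'-\gamma}\leq\eta^{\gamma'-\gamma}d^{-\gamma'}$, the Hardy–Sobolev inequality with exponent $\gamma'$ gives $\int_{\Omega_\eta}d^{-\gamma}|u_n-u|\leq\eta^{\gamma'-\gamma}K'\|u_n-u\|_{1,p}$, which is uniformly small as $\eta\to0$. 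Hence $d^{-\gamma}|u_n|\to d^{-\gamma}|u|$ in $L^1(\Omega)$, and together with $|u_n|^p\to|u|^p$, $|u_n|\to|u|$ in $L^1(\Omega)$ and the a.e. convergence, a generalized dominated convergence argument yields $\int_\Omega G_\eps(\cdot,u_n)\to\int_\Omega G_\eps(\cdot,u)$. This weighted compactness is the main obstacle; everything else is routine.

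With coercivity and weak lower semicontinuity in hand, a minimizing sequence for $J_\eps$ is bounded, admits a weakly convergent subsequence, and its weak limit $u_\eps$ realizes $\min_{W^{1,p}_0(\Omega)}J_\eps$; this proves $(\mathrm{a}_3)$ once I check that $J_\eps$ is Gâteaux differentiable — its derivative $\langle J_\eps'(u),\phi\rangle=\langle A_p(u),\phi\rangle-\int_\Omega g_\eps(\cdot,u)\phi\dx$ is well defined, the singular part being finite by Hardy–Sobolev — so that the minimizer satisfies the weak formulation of \eqref{auxprob}. Finally, for $(\mathrm{a}_4)$, since $G_\eps(x,0)=0$ one has $J_\eps(u_\eps)\leq J_\eps(0)=0$; feeding this into the coercivity estimate gives $\frac1p(1-c_2/\lambda_1)\|u_\eps\|_{1,p}^p\leq C\|u_\eps\|_{1,p}$, whence $\|u_\eps\|_{1,p}\leq L$ with $L$ depending only on the $\eps$-independent constants $c_2,c_3,C,\lambda_1,p$.
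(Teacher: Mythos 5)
Your proposal is correct, and its skeleton coincides with the paper's: both run the direct method on $J_\eps$ (an $\eps$-uniform coercivity estimate, weak sequential lower semicontinuity, Weierstrass--Tonelli, then differentiability of $J_\eps$ to pass from global minimizer to weak solution), and your coercivity inequality is literally the paper's. Where you genuinely diverge is in the two delicate steps. For the lower semicontinuity, the paper does not prove strong convergence of the weighted term: it observes that, by Hardy--Sobolev, $w\mapsto\int_\Omega \underline{u}^{-\gamma}w\dx$ is a continuous (hence weakly continuous) linear functional on $W^{1,p}_0(\Omega)$, applies it to $(u_n)_\pm\rightharpoonup u_\pm$ (invoking a cited lemma for the weak convergence of positive and negative parts), and then obtains only \emph{upper} semicontinuity of $u\mapsto\int_\Omega G_\eps(\cdot,u)\dx$ via Fatou's lemma. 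You instead prove the stronger fact $\int_\Omega d^{-\gamma}|u_n-u|\dx\to 0$ by splitting into the boundary strip $\Omega_\eta$, where you gain the uniform factor $\eta^{\gamma'-\gamma}$ from Hardy--Sobolev at a higher exponent $\gamma'\in(\gamma,1)$, and its complement, where $d^{-\gamma}$ is bounded and Rellich--Kondrachov applies, and you then conclude by generalized dominated convergence; this is self-contained, dispenses with the positive/negative-parts lemma, and yields full weak continuity of the potential term rather than mere upper semicontinuity. For the uniform bound $({\rm a}_4)$, the paper tests the Euler equation of \eqref{auxprob} with $u_\eps$ and absorbs the term $\frac{c_2}{\lambda_1}\|u_\eps\|_{1,p}^p$, whereas you exploit $J_\eps(u_\eps)\leq J_\eps(0)=0$ together with the $\eps$-independent coercivity estimate. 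Both give an $L$ independent of $\eps$; the paper's version has the minor advantage of applying to \emph{any} weak solution of \eqref{auxprob}, while yours is tied to the minimizer but is slightly more economical, since it recycles the coercivity inequality and needs no further pairing of the singular right-hand side with $u_\eps$.
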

\begin{proof}
Fix $\eps\in (0,1)$. Observe at first that $J_\eps\in C^1(X)$ because the function $g_\eps$ defined in \eqref{regularization} satisfies Carathéodory's conditions, by standard results on convolutions (see, e.g., \cite[Section 4.4]{Br}), and fulfills the growth condition \eqref{gest}. Next, conclusion $({\rm a}_2)$ of Proposition \ref{distcomparison} and Proposition \ref{hardysobolev} entail
\begin{equation}\label{hardyest}
\int_\Omega\underline{u}^{-\gamma}|u|\dx\leq l\int_\Omega d^{-\gamma}|u|\dx
\leq lK \| u\|_{1,p}\quad\forall\, u\in W^{1,p}_0(\Omega).
\end{equation}
Exploiting Lemma \ref{gestlemma}, \eqref{hardyest}, besides the Poincaré inequality, we thus arrive at
\begin{equation}\label{Jest}
\begin{aligned}
J_\eps(u) & \geq\frac{1}{p}\| u\|_{1,p}^p
-\int_\Omega\left(\int_0^{|u(x)|} g_\eps(x,t) \dt\right)\dx \\
&\geq \frac{1}{p}\| u\|_{1,p}^p
-\int_\Omega\left(c_1 \underline{u}^{-\gamma}|u|+\frac{c_2}{p} |u|^p + c_3 |u|\right)\dx \\
&\geq \frac{1}{p}\left(1-\frac{c_2}{\lambda_1}\right)\| u\|_{1,p}^p - C\| u\|_{1,p},\quad u\in W^{1,p}_0(\Omega),
\end{aligned}
\end{equation}
which yields the coercivity of $J_\eps$ (recall that $c_2<\lambda_1$). Moreover, $J_\eps$ turns out weakly sequentially lower semi-continuous. This evidently holds once
\begin{equation}\label{uscG}
u_n \rightharpoonup u\;\;\mbox{in}\;\; W^{1,p}_0(\Omega)\implies
\limsup_{n\to\infty}\int_\Omega G_\eps(\cdot,u_n)\dx\leq\int_\Omega G_\eps(\cdot,u)\dx.    
\end{equation}
To verify \eqref{uscG}, note that $(u_n)_\pm \rightharpoonup u_\pm$ in $W^{1,p}_0(\Omega)$;
cf. \cite[Lemma 2.1]{BG}. By \eqref{hardyest}, the linear functional $u\mapsto \int_\Omega \underline{u}^{-\gamma}u \dx$ is continuous on $W^{1,p}_0(\Omega)$. Therefore,
\begin{equation}\label{Gest1}
\lim_{n\to\infty}\int_\Omega\underline{u}^{-\gamma}|u_n|\dx
=\int_\Omega \underline{u}^{-\gamma}|u|\dx.
\end{equation}
Now, up to sub-sequences, one has $u_n\to u$ in $L^p(\Omega)$, $u_n(x)\to u(x)$ for almost every $x\in\Omega$, as well as 
\begin{equation}\label{inverselebesgue}
|u_n(x)|\leq w(x)\quad\mbox{a.e. in}\;\;\Omega,
\end{equation}
where $w\in L^p(\Omega)$, whence
\begin{equation}\label{Gest2}
\lim_{n\to\infty}\int_\Omega\left(\frac{c_2}{p}|u_n|^p+c_3|u_n|\right)\dx
=\int_\Omega \left(\frac{c_2}{p}|u|^p+c_3|u|\right) \dx.
\end{equation}
From \eqref{Gest1}--\eqref{Gest2}, \eqref{gest} (see also \eqref{Jest}), besides Fatou's lemma, we deduce
\begin{equation}\label{uppersemicont}
\begin{aligned}
&\int_\Omega\left(c_1\underline{u}^{-\gamma}|u|+\frac{c_2}{p}|u|^p+c_3|u|\right)\dx
-\limsup_{n\to\infty}\int_\Omega G_\eps(\cdot,u_n) \dx \\
&= \liminf_{n\to\infty}\int_\Omega\left(c_1\underline{u}^{-\gamma}|u_n|
+\frac{c_2}{p}|u_n|^p+c_3|u_n|-G_\eps(\cdot,u_n)\right) \dx \\
&\geq\int_\Omega\left(c_1\underline{u}^{-\gamma}|u|+\frac{c_2}{p}|u|^p+c_3|u|\right)\dx
-\int_\Omega G_\eps(\cdot,u) \dx,
\end{aligned}
\end{equation}
thus completing the proof of \eqref{uscG}. At this point, Weierstrass-Tonelli's theorem can be applied. So, $J_\eps$ possesses a global minimizer $u_\eps\in W^{1,p}_0(\Omega)$, and $({\rm a}_3)$ easily follows.

To show $({\rm a}_4)$, let us test \eqref{auxprob} with $u_\eps$. Lemma \ref{gestlemma}, \eqref{hardyest}, and Poincaré's inequality yield
\begin{equation*}
\begin{aligned}
\| u_\eps\|_{1,p}^p=\int_\Omega g_\eps(x,u_\eps)u_\eps\dx
\leq\int_\Omega\left(c_1 \underline{u}^{-\gamma} + c_2 |u_\eps|^{p-1} + c_3\right)|u_\eps|\dx \leq \frac{c_2}{\lambda_1}\| u_\eps\|_{1,p}^p+C\| u_\eps\|_{1,p},
\end{aligned}
\end{equation*}
where $C>0$ does not depend on $\eps$. Recalling that $c_2<\lambda_1$, a standard argument leads to the conclusion.
\end{proof}
\begin{lemma}\label{solincl}
Suppose \ref{localbound}--\ref{sublinear}. If $u_n$, $n\in\N$, denotes the solution of \hyperlink{auxprob}{$({\rm P}_{1/n})$} given by $({\rm a}_3)$ in Theorem \ref{solauxprob} for $\eps:=1/n$ then, taking a sub-sequence when necessary, $u_n\rightharpoonup u$ in $W^{1,p}_0(\Omega)$, $u\geq \underline{u}$, and
\begin{equation}\label{diffinclusion}
\hypertarget{diffinclusion}{}
\left\{
\begin{alignedat}{2}
-\Delta_p u & =v(x) \quad &&\mbox{in}\;\;\Omega, \\
u & =0\quad &&\mbox{on}\;\;\partial\Omega,
\end{alignedat}
\right.
\end{equation}
where $v$ enjoys the properties:
\begin{itemize}
\item $v\in L^q(\Omega)_+\cap W^{-1,p'}(\Omega)$, with $1<q<\min\{\frac{1}{\gamma}, \frac{p^*}{p-1}\}$;
\item $\underline{f}(u)\leq v\leq \overline{f}(u)$.
\end{itemize}
\end{lemma}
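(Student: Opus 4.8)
The plan is to extract a limit from the uniform bound $({\rm a}_4)$ of Theorem \ref{solauxprob}, to control the right-hand sides in $L^q$ through Lemma \ref{gestlemma}, to upgrade to strong convergence so that $-\Delta_p$ passes to the limit, and finally to read the two-sided bound off the defining envelopes of $g_{1/n}$. First I would fix the subsequence: by $({\rm a}_4)$ the sequence $(u_n)$ is bounded in $W^{1,p}_0(\Omega)$, so up to a subsequence $u_n\rightharpoonup u$, $u_n\to u$ in $L^p(\Omega)$ and a.e. Writing $v_n:=g_{1/n}(\cdot,u_n)=-\Delta_p u_n\ge 0$, conclusion $({\rm a}_2)$ of Proposition \ref{distcomparison} gives $\underline{u}^{-\gamma}\le l^{-\gamma}d^{-\gamma}$, whence Proposition \ref{distsumm} yields $\underline{u}^{-\gamma}\in L^q(\Omega)$ for $q<1/\gamma$; since $(|u_n|^{p-1})$ is bounded in $L^{p^*/(p-1)}(\Omega)$, Lemma \ref{gestlemma} makes $(v_n)$ bounded in $L^q(\Omega)$ for every $1<q<\min\{\frac1\gamma,\frac{p^*}{p-1}\}$. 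Hence, along a further subsequence, $v_n\rightharpoonup v\ge 0$ in each such $L^q(\Omega)$, the limit $v$ being the same function throughout.

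Next I would upgrade to strong convergence. The identity $\langle A_p(u_n),u_n-u\rangle=\int_\Omega v_n(u_n-u)\dx$ reduces matters to showing $\int_\Omega v_n|u_n-u|\dx\to 0$, which I would do via the bound in Lemma \ref{gestlemma}. The pieces controlled by $c_2|u_n|^{p-1}+c_3$ vanish by $L^1$-convergence and Hölder's inequality, the compact embedding $W^{1,p}_0(\Omega)\hookrightarrow L^{(p^*/(p-1))'}(\Omega)$ being available because $(p^*/(p-1))'<p^*$. The delicate term is $\int_\Omega\underline{u}^{-\gamma}|u_n-u|\dx$: here I would use that $(u_n-u)_\pm\rightharpoonup 0$ in $W^{1,p}_0(\Omega)$ (cf. \cite[Lemma 2.1]{BG}) and that $w\mapsto\int_\Omega\underline{u}^{-\gamma}w\dx$ is continuous on $W^{1,p}_0(\Omega)$ by Hardy--Sobolev (see \eqref{hardyest}), so that $\int_\Omega\underline{u}^{-\gamma}|u_n-u|\dx=\int_\Omega\underline{u}^{-\gamma}(u_n-u)_+\dx+\int_\Omega\underline{u}^{-\gamma}(u_n-u)_-\dx\to 0$. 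Thus $\langle A_p(u_n),u_n-u\rangle\to 0$, and the $(S_+)$-property of $A_p$ gives $u_n\to u$ in $W^{1,p}_0(\Omega)$ and $A_p(u_n)\to A_p(u)$ in $W^{-1,p'}(\Omega)$. Testing $A_p(u_n)=v_n$ against $\phi\in C^\infty_c(\Omega)$ and using $v_n\rightharpoonup v$ in $L^q(\Omega)$ then identifies $-\Delta_p u=v$ with $v\in L^q(\Omega)_+\cap W^{-1,p'}(\Omega)$.

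For the barrier $u\ge\underline{u}$ I would prove $u_n\ge\underline{u}$ for all large $n$ and pass to the a.e. limit. On the set $\{u_n<\underline{u}\}$ one has $u_n<\underline{u}\le\|\underline{u}\|_\infty<\frac\delta2$, so as soon as $\frac1n<\frac\delta2$ every argument $\max\{\underline{u}(x),u_n(x)-\xi\}$ with $|\xi|<\frac1n$ lies in $(0,\delta)$; then \eqref{estbelow} forces $g_{1/n}(x,u_n)\ge\lambda_1\underline{u}^{p-1}=-\Delta_p\underline{u}$ there. Testing the ensuing inequality $-\Delta_p\underline{u}\le-\Delta_p u_n$ with $(\underline{u}-u_n)_+$ and invoking strict monotonicity forces $|\{u_n<\underline{u}\}|=0$; this is exactly the point at which the sharper requirement \eqref{condoveru} is used. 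Consequently $u\ge\underline{u}>0$, so $u>0$ a.e. in $\Omega$.

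Finally, for $\underline{f}(u)\le v\le\overline{f}(u)$ I would exploit the envelopes $\psi_n:=\essinf_{|t-u_n|<1/n}g(\cdot,t)\le v_n\le\esssup_{|t-u_n|<1/n}g(\cdot,t)=:\Phi_n$. Since $g(x,t)=f(\max\{\underline{u}(x),t\})$ and $u_n\to u$ a.e. with window $\frac1n\to 0$, the arguments $\max\{\underline{u},t\}$ concentrate at $u\,(\ge\underline{u})$, giving $\liminf_n\psi_n\ge\underline{f}(u)$ and $\limsup_n\Phi_n\le\overline{f}(u)$ pointwise. To transfer these to $v$ I would integrate over an arbitrary measurable $A\subseteq\Omega$ and use $\int_A v_n\dx\to\int_A v\dx$ (valid as $\chi_A\in L^{q'}(\Omega)$): Fatou applied to $0\le\psi_n\le v_n$ gives $\int_A\underline{f}(u)\dx\le\int_A v\dx$, while the domination $\Phi_n\le c_1\underline{u}^{-\gamma}+c_2|u_n|^{p-1}+c_3$, whose right-hand side converges in $L^1(\Omega)$, supplies uniform integrability, so reverse Fatou gives $\int_A v\dx\le\limsup_n\int_A\Phi_n\dx\le\int_A\overline{f}(u)\dx$; arbitrariness of $A$ yields the claim a.e. I expect the two envelope steps to be the real obstacles: the singular term in the strong-convergence argument, handled by weak convergence of the truncations $(u_n-u)_\pm$ together with Hardy--Sobolev rather than by a naive dominated estimate, and the passage of the one-sided bounds $\psi_n,\Phi_n$ to the weak limit $v$, which rests on the uniform integrability furnished by Lemma \ref{gestlemma} and not on any pointwise convergence of $v_n$.
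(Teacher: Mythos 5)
Your proof is correct and delivers every assertion of the lemma, but it follows a genuinely different route from the paper's at three points. (i) To identify the equation, you show $\langle A_p(u_n),u_n-u\rangle\to 0$ --- handling the singular term via $(u_n-u)_\pm\rightharpoonup 0$ together with the Hardy--Sobolev bound \eqref{hardyest}, and the power term via compactness below $p^*$ --- and then invoke the $(S_+)$-property, obtaining \emph{strong} convergence $u_n\to u$ in $W^{1,p}_0(\Omega)$. The paper never claims strong convergence: it passes to the limit in \eqref{weaklimit1} with weak convergence only, and the first equality there, $\langle A_p(u),\phi\rangle=\lim_n\langle A_p(u_n),\phi\rangle$, is precisely the step your computation justifies (weak convergence alone does not commute with the nonlinear operator $A_p$); so your route is more explicit and, if anything, yields more. (ii) You obtain $u\geq\underline{u}$ at the level of the approximations, proving $u_n\geq\underline{u}$ once $1/n<\frac{\delta}{2}$ by checking $g_{1/n}(\cdot,u_n)\geq\lambda_1\underline{u}^{p-1}$ on $\{u_n<\underline{u}\}$ (where \eqref{condoveru} enters, exactly as in the paper) and testing with $(\underline{u}-u_n)_+$; the paper instead derives the comparison for the limit $u$ from the window bounds \eqref{bounds4} on $v$ combined with \eqref{inclusionsets}--\eqref{imagesets}. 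Your version is more elementary and decouples the comparison from the analysis of $v$. (iii) For $\underline{f}(u)\leq v\leq\overline{f}(u)$, the paper uses Egorov's theorem and tests the weak convergence $g_{\eps_n}(\cdot,u_n)\rightharpoonup v$ against $\phi\chi_{\Omega\setminus E_\tau}$, sending $\tau\to 0$ and then $\sigma\to 0$; you instead prove the pointwise a.e. inequalities $\liminf_n\psi_n\geq\underline{f}(u)$ and $\limsup_n\Phi_n\leq\overline{f}(u)$ and transfer them to $v$ by Fatou and a dominated reverse-Fatou over arbitrary measurable sets $A$ (legitimate since $\chi_A\in L^{q'}(\Omega)$ and the majorant from Lemma \ref{gestlemma} converges in $L^1(\Omega)$). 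Both arguments are sound; they rest on the same basic envelope inequality for mollifications, and they share the same tacit point on the set $\{u=\underline{u}\}$, where the envelopes of $g$ involve the pointwise value $f(\underline{u}(x))$ rather than only essential values of $f$.

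The one omission on your side is the membership $v\in W^{-1,p'}(\Omega)$. By the convention fixed in Section 2 of the paper, this means that $\phi\mapsto\int_\Omega v\phi\dx$ is continuous on all of $W^{1,p}_0(\Omega)$, which does not follow merely from the distributional identity $-\Delta_p u=v$ with $v\in L^q(\Omega)$; the paper devotes a separate argument to it, using the majorant $\eta$ from \eqref{gdualest} and Fatou. In your framework the fix is short: for $\phi\in W^{1,p}_0(\Omega)_+$ take nonnegative $\phi_m\in C^\infty_c(\Omega)$ with $\phi_m\to\phi$ in $W^{1,p}_0(\Omega)$ and a.e., apply Fatou and $\int_\Omega v\phi_m\dx=\langle A_p(u),\phi_m\rangle$ to get $\int_\Omega v\phi\dx\leq\langle A_p(u),\phi\rangle\leq C\|\phi\|_{1,p}$, then split a general $\phi$ as $\phi_+-\phi_-$; but this step should be stated, not absorbed into the word ``identifies''.
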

\begin{proof}
Conclusion $({\rm a}_4)$ of Theorem \ref{solauxprob} ensures that $\{u_n\} \subseteq W^{1,p}_0(\Omega)$ is bounded. Therefore, up to sub-sequences,  
$$u_n\rightharpoonup u\;\;\mbox{in}\;\; W^{1,p}_0(\Omega),\quad u_n\to u\;\;\mbox{in}\;\; L^p(\Omega),\quad u_n(x)\to u(x)\;\;\mbox{for almost every}\;\; x\in\Omega,$$
and \eqref{inverselebesgue} holds. Now, fix any $r\in\left(1,\frac{1}{\gamma}\right)$. Due to Proposition \ref{distsumm} one has $d^{-\gamma}\in L^r(\Omega)$. Proposition \ref{distcomparison} thus entails $\underline{u}^{-\gamma}\in L^r(\Omega)$. By the Sobolev embedding theorem $\{|u_n|^{p-1}\}$ turns out bounded in $L^{\frac{p^*}{p-1}}(\Omega)$. Hence, from Lemma \ref{gestlemma} it follows
\begin{equation} \label{weakconv}
g_{\eps_n}(\cdot,u_n)\rightharpoonup v \quad \mbox{in} \quad L^q(\Omega),
\end{equation}
where $q\in\left(1,\min\{\frac{1}{\gamma},\frac{p^*}{p-1}\}\right)$. A standard argument, chiefly based on Mazur's theorem \cite[Corollary 3.8]{Br} and the information $g_{\eps_n}(\cdot,u_n)\geq 0$, $n\in\N$, yields $v\geq 0$. Moreover,
\begin{equation}\label{weaklimit1}
\langle A_p(u),\phi\rangle=\lim_{n\to\infty}\langle A_p(u_n),\phi\rangle= \lim_{n\to\infty}\int_\Omega g_{\eps_n}(\cdot,u_n)\phi\dx = \int_\Omega v\phi\dx\quad \forall\,\phi\in C^\infty_c(\Omega).
\end{equation}
We claim that $v\in W^{-1,p'}(\Omega)$. Indeed, using Lemma \ref{gestlemma} and \eqref{inverselebesgue} provides
\begin{equation}\label{gdualest}
0\leq g_{\eps_n}(\cdot,u_n)\leq c_1 \underline{u}^{-\gamma} +c_2 w^{p-1} + c_3 =:\eta, \quad n\in\N,
\end{equation}
with $\eta\in W^{-1,p'}(\Omega)$ by \eqref{hardyest}. If $\phi\in W^{1,p}_0(\Omega)_+$, $\{\phi_m\}\subseteq C^\infty_c(\Omega)_+$, and $\phi_m \to \phi$ in $W^{1,p}_0(\Omega)$ then 
\begin{equation*}\begin{aligned}
\int_\Omega v\phi\dx & \leq\liminf_{m\to\infty}\int_\Omega v\phi_m\dx
=\liminf_{m\to\infty} \lim_{n\to\infty}\int_\Omega g_{\eps_n}(x,u_n)\phi_m\dx \\
&\leq \lim_{m\to\infty}\int_\Omega\eta\phi_m\dx =\int_\Omega\eta\phi\dx
\leq C\|\phi\|_{1,p}
\end{aligned}
\end{equation*}
thanks to \eqref{weaklimit1}--\eqref{gdualest} besides Fatou's lemma. For a generic $\phi\in W^{1,p}_0(\Omega)$ we still have 
\begin{equation*}
%\label{vdualest}
\left|\int_\Omega v\phi\dx\right|\leq\int_\Omega v|\phi| \dx\leq C\|\phi\|_{1,p}
\end{equation*}
because $|\phi|\in W^{1,p}_0(\Omega)_+$. Consequently, $v\in W^{-1,p'}(\Omega)$, as desired.

Since $C^\infty_c(\Omega)$ is dense in $W^{1,p}_0(\Omega)$, from \eqref{weaklimit1} it follows
\begin{equation}\label{weaklimit2}
\langle A_p(u),\phi\rangle=\int_\Omega v\phi\dx\quad\forall\,\phi\in W^{1,p}_0(\Omega),
\end{equation}
namely \eqref{diffinclusion} holds true. Further, the Maximum Principle \cite[Theorem 3.2.2]{PS} forces $u\geq 0$.

Next, on account of Egorov's theorem, to every $\tau>0$ there corresponds a measurable set $E_\tau\subseteq \Omega$ such that $|E_\tau|<\tau$ and $u_n\to u$ uniformly on $\Omega\setminus E_\tau$, whence for any $\sigma>0$ we can find $\nu\in\N$ fulfilling
\begin{equation*}
%\label{egorov}
n>\nu\implies\eps_n<\frac{\sigma}{2},\quad\sup_{x\in\Omega\setminus E_\tau} |u_n(x)-u(x)|<\frac{\sigma}{2}.
\end{equation*}
Thus, a fortiori,
\begin{equation}\label{bounds1}
\begin{aligned}
\essinf_{|s-u(x)|<\sigma} g(x,s) &\leq  \essinf_{|s-u_n(x)|<\frac{\sigma}{2}} g(x,s) \leq g_{\eps_n}(x,u_n(x)) \\
&\leq \esssup_{|s-u_n(x)|<\frac{\sigma}{2}} g(x,s) \leq \esssup_{|s-u(x)|<\sigma} g(x,s)
\end{aligned}
\end{equation}
whenever $n>\nu$ and $x\in\Omega\setminus E_\tau$. Fix $\phi\in C^\infty_c(\Omega)_+$. %If $\chi_m:=\chi_{\Omega\setminus E_\tau}*\rho_m$, where $m\in\N$ and $\rho_m(x):=m^N\rho(m|x|)$, $x\in\R^N$, with $\rho$ as in \eqref{proprho}, then 
Multiplying \eqref{bounds1} by $\phi\chi_{\Omega\setminus E_\tau}\in L^\infty(\Omega)_+$, integrating over $\Omega$, letting $n\to\infty$, and using \eqref{weakconv} we arrive at
% %
% \begin{equation*}
% %\label{bounds2}
% \int_\Omega\essinf_{|s-u|<\sigma}\, g(\cdot,s)\phi\chi_m\dx
% \leq\int_\Omega v\phi\chi_m\dx
% \leq\int_\Omega\esssup_{|s-u|<\sigma} g(\cdot,s)\, \phi\chi_m\dx
% \end{equation*}
% %
% for any $m$ large enough. Standard properties of convolution and Fatou's lemma yield here
%
\begin{equation*}
\int_{\Omega\setminus E_\tau}\essinf_{|s-u|<\sigma} g(\cdot,s)\,\phi\dx
\leq\int_{\Omega\setminus E_\tau} v\phi\dx\leq
\int_{\Omega\setminus E_\tau}\esssup_{|s-u|<\sigma} g(\cdot,s)\,\phi\dx.
\end{equation*}
Since $\phi\geq 0$ was arbitrary, this entails
\begin{equation*}
%\label{bounds3}
\essinf_{|s-u(x)|<\sigma} g(x,s)\leq v(x)\leq\esssup_{|s-u(x)|<\sigma} g(x,s)\quad \mbox{for almost all} \;\; x\in\Omega\setminus E_\tau.
\end{equation*}
Now, from $\lim_{\tau\to 0^+}|E_\tau|=0$ it follows 
\begin{equation}\label{bounds4}
\essinf_{|s-u(x)|<\sigma} g(x,s) \leq v(x) \leq \esssup_{|s-u(x)|<\sigma} g(x,s) \quad \mbox{a.e. in}\;\;\Omega.
\end{equation}
Suppose $\sigma\in\left(0,\frac{\delta}{2}\right)$, where $\delta>0$ satisfies \eqref{condoveru}. Then
\begin{equation}\label{inclusionsets}
]{u}(x)-\sigma,{u}(x)+\sigma[\, \subseteq\, ]-\delta,\delta[\quad \forall\, x\in\Omega(u\leq\underline{u}).
\end{equation}
Moreover, due to \eqref{truncation}, 
\begin{equation}\label{imagesets}
g(x,]-s,s[)\subseteq f([\underline{u}(x),s[)\quad\mbox{whenever}\;\; s>\underline{u}(x).
\end{equation}
Via \eqref{weaklimit2}, \eqref{bounds4}, \eqref{inclusionsets}, \eqref{imagesets} with $s=\delta$, and \eqref{estbelow} one has, after recalling that $\underline{u}=k\phi_1$ with $k>0$ (cf. Lemma \ref{subsollemma}),
\begin{equation*}
%\label{comparison}
\begin{aligned}
\langle A_p & (u),(\underline{u}-u)_+\rangle=
\int_{\Omega(u\leq\underline{u})} v(\underline{u}-u)\dx\\
&\geq\int_{\Omega(u\leq\underline{u})}\left[\essinf_{|s-u(x)|<\sigma} g(x,s)\right] (\underline{u}-u)\dx \\
&\geq\int_{\Omega(u\leq\underline{u})}\left[\essinf_{|s|<\delta} g(x,s)\right] (\underline{u}-u)\dx =
\int_{\Omega(u\leq\underline{u})}\left[\essinf_{s\in[\underline{u}(x),\delta[} f(s)\right]
(\underline{u}-u)\dx \\
&\geq\lambda_1\int_{\Omega(u\leq\underline{u})}\underline{u}^{p-1} (\underline{u}-u)\dx
=\langle A_p(\underline{u}), (\underline{u}-u)_+ \rangle,
\end{aligned}
\end{equation*}
whence $\underline{u}\leq u$ by the strict monotonicity of the operator $A_p$. Finally, let $\sigma\to 0^+$ in \eqref{bounds4} and exploit \eqref{imagesets} to achieve
\begin{equation*}
%\label{subdifferential}
\underline{f}(u(x))\leq v(x)\leq\overline{f}(u(x))\quad \mbox{for almost every} \;\; x\in\Omega.
\end{equation*}
This completes the proof.
\end{proof}
\begin{rmk}
In the context of non-smooth analysis, Lemma \ref{solincl} can be seen as an existence result for the differential inclusion
\begin{equation*}
\left\{
\begin{alignedat}{2}
-\Delta_p u & \in\partial F(u)\quad &&\mbox{in}\;\;\Omega, \\
u & =0 \quad &&\mbox{on}\;\;\partial\Omega,
\end{alignedat}
\right.
\end{equation*}
where $\partial F$ denotes the Clarke sub-differential of $F(s):=\int_0^s f(t)\dt$; see, e.g., \cite[Example 1]{LM}.
\end{rmk}
\begin{lemma}\label{regularity}
Under \ref{localbound}--\ref{sublinear}, any $u\in W^{1,p}_0(\Omega)$ such that
\begin{equation}\label{subprob}
\left\{
\begin{alignedat}{2}
-\Delta_p u & \leq \overline{f}(u) \quad && \mbox{in}\;\;\Omega, \\
u & >0 \quad && \mbox{in}\;\;\Omega, \\
u & =0 \quad &&\mbox{on}\;\;\partial\Omega,
\end{alignedat}
\right.
\end{equation}
belongs to $L^\infty(\Omega)$. If, moreover, $u$ satisfies \eqref{diffinclusion} for some $v \in L^1_\loc(\Omega)$ and $u\geq cd$ for a suitable $c>0$, then $u\in C^{1,\alpha}(\overline{\Omega})$. The corresponding $L^\infty$ and $C^{1,\alpha}$ estimates do not depend on $u$.
\end{lemma}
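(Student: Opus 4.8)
The plan is to establish the two assertions in turn.

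\emph{Step 1 ($L^\infty$ bound).} Since the majorant in \eqref{fest} is continuous, it survives the $\esssup$ and the limit defining $\overline f$, so $\overline f$ inherits the estimate $\overline f(u)\le c_1 u^{-\gamma}+\hat c u^{p-1}+M$ with $\hat c<\lambda_1$. I would first test the distributional inequality $-\Delta_p u\le\overline f(u)$ against $\phi:=u\in W^{1,p}_0(\Omega)_+$; this is legitimate because the right-hand side is integrable (indeed $u^{1-\gamma}\in L^1(\Omega)$, as $u\in L^p(\Omega)$ and $1-\gamma<p$, while $u^p,u\in L^1(\Omega)$). Bounding $\int_\Omega u^{1-\gamma}\dx\le C\|u\|_p^{1-\gamma}$ by H\"older and using Poincar\'e's inequality $\lambda_1\|u\|_p^p\le\|u\|_{1,p}^p$, the term $\hat c\|u\|_p^p$ is absorbed into the left-hand side thanks to $\hat c<\lambda_1$, yielding
\[ \Big(1-\tfrac{\hat c}{\lambda_1}\Big)\|u\|_{1,p}^p\le C_1\|u\|_{1,p}^{1-\gamma}+C_2\|u\|_{1,p}. \]
Because $1-\gamma<1<p$, this forces a uniform bound $\|u\|_{1,p}\le L$, with $L$ depending only on the structural constants.

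\emph{Step 2 (from $W^{1,p}_0$ to $L^\infty$).} Starting from this uniform energy bound, I would run a standard De Giorgi--Moser iteration on the super-level sets $\Omega(u>k)$, $k\geq 1$. There the singular contribution is harmless, since $u^{-\gamma}\le 1$, so $\overline f(u)\le C(1+u^{p-1})$ on $\{u>1\}$; as the exponent $p-1$ is subcritical ($p-1<p^*-1$), the Sobolev embedding supplies the gain making the iteration converge and producing $\|u\|_\infty\le C(L)$. All constants depend only on $N,p,\Omega$, the constants of \eqref{fest}, and $L$, whence the $L^\infty$ estimate is independent of $u$.

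\emph{Step 3 ($C^{1,\alpha}$: interior regularity and control of the forcing).} Assume now $-\Delta_p u=v$ with $0\le v\le\overline f(u)$ and $u\ge cd$. Combining the growth bound with $u\ge cd$ and $u\in L^\infty$ gives the pointwise estimate $0\le v\le c_1(cd)^{-\gamma}+\hat c\|u\|_\infty^{p-1}+M\le C_1 d^{-\gamma}+C_2$. On any $K\Subset\Omega$ one has $d\ge c_K>0$, hence $c\,c_K\le u\le\|u\|_\infty$ on $K$, so $v\in L^\infty(K)$ by \ref{localbound}; interior regularity $u\in C^{1,\alpha}_\loc(\Omega)$ thus follows at once from the DiBenedetto--Tolksdorf theory. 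Moreover Proposition~\ref{distsumm} gives $v\in L^q(\Omega)$ for every $q<1/\gamma$.

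\emph{Step 4 (boundary regularity --- the main obstacle).} The difficulty is to upgrade interior $C^{1,\alpha}_\loc$ to $C^{1,\alpha}(\overline\Omega)$ despite the blow-up $v\sim d^{-\gamma}$ at $\partial\Omega$: since $d^{-\gamma}\in L^q$ only for $q<1/\gamma$, which may fall below $N$, mere integrability of $v$ does not yield gradient H\"older continuity, and the fine structure of the singularity must be exploited. I would compare $u$ with the solution $w$ of $-\Delta_p w=C_1 d^{-\gamma}+C_2$, $w=0$ on $\partial\Omega$, and establish $w\in C^{1,\alpha}(\overline\Omega)$ through an explicit boundary barrier modeled on powers of $d$ (the one-dimensional prototype $-(|w'|^{p-2}w')'=t^{-\gamma}$ integrates to $w\in C^{1,1-\gamma}$, so the boundary gradient stays finite and H\"older); together with $u\ge cd$ this localizes matters to a neighborhood of $\partial\Omega$ where Lieberman's boundary regularity for the $p$-Laplacian applies. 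The resulting estimate is uniform because the barrier, the comparison constants, and Lieberman's constant depend only on $\partial\Omega$, $N$, $p$, $\gamma$, $\|u\|_\infty$, and $c$. Controlling the gradient up to $\partial\Omega$ against this non-$L^\infty$, low-integrability forcing is where essentially all the work lies; gluing the interior and boundary estimates is then routine.
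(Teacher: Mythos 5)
Your Steps 1--3 are, in substance, the paper's own argument, only reordered: the pointwise bound $\overline f(u)\le c_1u^{-\gamma}+\hat c\,u^{p-1}+M$ inherited from \eqref{fest}, the energy estimate obtained by testing with $u$ itself (the paper performs this at the end, to get uniformity of the estimates), a De Giorgi-type iteration on the super-level sets $\Omega(u\geq k)$, $k>1$, where $u^{-\gamma}\le 1$ makes the singular term harmless (the paper delegates the iteration to \cite[Lemma 3.3]{CGL}), and finally the reduction $0\le v\le Cd^{-\gamma}$, which is exactly the paper's \eqref{vbound}. Up to that point your proposal is sound.

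The gap is Step 4, and it is genuine. The paper closes this step with a single citation, \cite[Lemma 3.1]{H}, which is precisely the statement you would need: a $W^{1,p}_0(\Omega)$ solution of $-\Delta_p u=v$ with $0\le v\le Cd^{-\gamma}$, $0<\gamma<1$, belongs to $C^{1,\alpha}(\overline\Omega)$ with an estimate depending only on $C$, $\gamma$, and the data. Your sketch does not supply a proof of it. First, weak comparison gives the ordering $u\le w$, but a pointwise sandwich $cd\le u\le w$ with $w\in C^{1,\alpha}(\overline\Omega)$ transfers no regularity to $\nabla u$: H\"older continuity of the gradient is not inherited through ordering, so the comparison function $w$ contributes at most the Lipschitz-type bound $u\le Cd$ near $\partial\Omega$, not the conclusion. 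Second, the tool you then invoke, Lieberman's boundary $C^{1,\alpha}$ theorem, requires in its standard form a right-hand side in $L^\infty$ (or at least $L^q$ with $q>N$), which is exactly what $v\sim d^{-\gamma}$ fails to be --- as you yourself observe; since the singularity of $v$ sits precisely on $\partial\Omega$, localizing to a boundary neighborhood does not remove it, and invoking the theorem there begs the question. What actually rescues the situation, and is the substance of Hai's lemma, is that $d^{-\gamma}$ satisfies the Morrey-type condition $\int_{B_r(x_0)\cap\Omega}d^{-\gamma}\dx\le Cr^{N-\gamma}$ with $N-\gamma>N-1$, which places the forcing within the scope of Lieberman's generalized (``natural conditions'') gradient-H\"older theory; alternatively one cites \cite[Lemma 3.1]{H} as the paper does. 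Your proposal neither proves nor cites such a result (even the asserted $C^{1,\alpha}(\overline\Omega)$ regularity of $w$ rests on a one-dimensional heuristic, and barriers by themselves only give pointwise bounds), so the decisive step of the lemma remains open.
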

\begin{proof}
Fix $x\in\Omega$ fulfilling $u(x)>0$. Since $u(x)-\sigma>0$ whatever $\sigma>0$ sufficiently small, \eqref{fest} entails
\begin{equation*}\begin{aligned}
\overline{f}(u(x)) & \leq\lim_{\sigma\to 0^+}\sup_{|s-u(x)|<\sigma}\left(c_1 s^{-\gamma}
+\hat{c} s^{p-1}+ M\right) \\
& \leq\lim_{\sigma\to 0^+}\left(c_1(u(x)-\sigma)^{-\gamma}+\hat{c}(u(x)+\sigma)^{p-1}
+ M\right)=c_1 u(x)^{-\gamma}+\hat{c} u(x)^{p-1}+M.
\end{aligned}
\end{equation*}
Accordingly,
\begin{equation}
\label{festsharp}
\overline{f}(u) \leq c_1 u^{-\gamma} + \hat{c}u^{p-1} + M.
\end{equation}
Now, pick $k>1$. Choosing $(u-k)_+$ as a test function in \eqref{subprob} we obtain
\begin{equation}\label{testDG}
\int_\Omega|\nabla u|^{p-2}\nabla u\nabla (u-k)_+\dx
\leq\int_\Omega \overline{f}(u)(u-k)_+\dx,
\end{equation}
while \eqref{festsharp} leads to
\begin{equation}\label{forDG}
\begin{aligned}
\int_\Omega \overline{f}(u)(u-k)_+ \dx &\leq \int_{\Omega(u\geq k)} \left(c_1 u^{1-\gamma} + \hat{c}u^p + Mu \right) \dx \\
&\leq C \int_{\Omega(u\geq k)} u^p \dx 
\leq C \left( \|(u-k)_+\|_p^p + k^p|\Omega(u\geq k)|\right)
\end{aligned}
\end{equation}
by enlarging $C>0$ if necessary. On the other hand, from Sobolev's inequality it follows
\begin{equation}\label{Sobolev}
\int_\Omega |\nabla u|^{p-2}\nabla u\nabla (u-k)_+\dx =\|\nabla (u-k)_+\|_p^p 
\geq c\|(u-k)_+\|_{p^*}^p.
\end{equation}
Putting \eqref{forDG}--\eqref{Sobolev} into \eqref{testDG} and using \cite[Lemma 3.3]{CGL} yield $u\in L^\infty(\Omega)$.
\vskip0pt
Next, assume $-\Delta_p u=v$ for some $v\in L^1_\loc(\Omega)$ and $u\geq cd$, with suitable $c>0$. Inequality \eqref{festsharp} again produces
$$\overline{f}(u)\leq c_1 u^{-\gamma}+\hat{c} u^{p-1}+M
\leq c_1 c^{-\gamma} d^{-\gamma} + \hat{c} u^{p-1}+M,$$
whence
\begin{equation}
\label{vbound}
v=-\Delta_ p u\leq \overline{f}(u)\leq Cd^{-\gamma},
\end{equation}
because $u\in L^\infty(\Omega)$.  Through \cite[Lemma 3.1]{H} one achieves $u\in C^{1,\alpha}(\overline{\Omega})$.
\vskip0pt
To conclude, notice that both $L^\infty$ and $C^{1,\alpha}$ estimates depend only on $\Vert u\Vert_{1,p}$, besides the data of the problem. So, they are uniform in $u$ once $\|u\|_{1,p}$ depends only on such data. To check this, let us test \eqref{subprob} with $u$ and observe that, by \eqref{festsharp}, Young's inequality, and Poincaré's inequality, for every $\eps\in (0,\lambda_1-\hat{c})$ we have
$$\| u\|_{1,p}^p\leq\int_\Omega\overline{f}(u)u\dx
\leq\int_\Omega\left(c_1 u^{1-\gamma}+\hat{c}u^p+ Mu\right)\dx
\leq (\hat{c}+\eps)\|u\|_p^p+C_\eps
\leq\frac{\hat{c}+\eps}{\lambda_1}\| u\|_{1,p}^p + C_\eps.$$
Since $\frac{\hat{c}+\eps}{\lambda_1}<1$, the assertion holds.
\end{proof}
\begin{proof}[Proof of Theorem \ref{mainthm}]
The argument is patterned after \cite[Theorem 60]{LM}. We want to show that the function $u\in C^{1,\alpha}(\overline{\Omega})$ produced in Lemma \ref{solincl} (see also Lemma \ref{regularity}) turns out a strong solution of \eqref{problem}. Lemma \ref{solincl} ensures that
\begin{equation}\label{useful}
-\Delta_p u=v\in [\underline{f}(u),\overline{f}(u)],    
\end{equation}
so $v\in L^2_\loc(\Omega)$ by \eqref{vbound}. From \cite[Theorem 2.1]{CM} it thus follows $|\nabla u|^{p-2}\nabla u\in W^{1,2}_\loc(\Omega)$. Now, Proposition \ref{locality} and \ref{zeromeasure} yield $\Delta_p u = 0$ a.e. in $u^{-1}(\D_f)$. Hence, 
\begin{equation}\label{locality1}
-\Delta_p u(x)=0=f(u(x))\quad\mbox{for almost every}\;\; x\in u^{-1}(\D_f)
\end{equation}
thanks to \eqref{useful} and \ref{fzero}. On the other hand, one evidently has
\begin{equation}\label{locality2}
-\Delta_p u=f(u)\quad\mbox{a.e. in} \;\;\Omega\setminus u^{-1}(\D_f).
\end{equation}
Gathering \eqref{locality1}--\eqref{locality2} together concludes the proof.
\end{proof}
Let us finally make two examples of nonlinearities that comply with \hyperlink{Hf}{$({\rm H}_f)$}.
\begin{ex}[Non-singular case]
If $\sigma>0$ and $g:[0,1]\to\R^+_0$ is of bounded variation on $[0,1]$ then the function %$f:\R^+\to\R^+_0$ given by
\begin{equation*}
f(s):=(g(s)+\sigma)\chi_{(0,1)}(s),\quad s\in\R^+,
%= 
%\left\{
%\begin{alignedat}{2}
%&g(s)+\sigma \quad &&\mbox{for}\;\; s\in(0,1), \\
%&0 \quad &&\mbox{when}\;\; s\geq 1,
%\end{alignedat}
%\right.
\end{equation*}
satisfies \ref{localbound}--\ref{fzero}. In particular, Jordan's decomposition theorem \cite[Corollary 2.7]{WZ} ensures that $\mathcal{D}_f$ turns out at most countable, whence $|\mathcal{D}_f|=0$.
\end{ex}
\begin{ex}[Singular case]
Suppose $\lambda\in[0,\lambda_1)$ and denote by $[s]$ the integer part of $s\in\R$. Then the function 
%$f:\R^+\to\R^+_0$ defined by setting
%
\begin{equation*}
f(s):=\left[\frac{1}{s}\right]^{\gamma}\chi_{(0,1)}(s)+\lambda[s]^{p-1}\chi_{[1,+\infty)}(s) \quad\forall\, s\in\R^+
%
%\left\{
%\begin{alignedat}{2}
%&\left[\frac{1}{s}\right]^{\gamma} \quad &&\mbox{for}\;\; s\in(0,1), \\
%&\lambda[s]^{p-1} \quad &&\mbox{when}\;\; s\geq 1,
%\end{alignedat}
%\right.
\end{equation*}
(where, as above, $0<\gamma<1<p<N$) fulfills \hyperlink{Hf}{$({\rm H}_f)$}. Notice that $f$ has countably many discontinuity points accumulating at $s=0$. So, any solution to \eqref{problem} must cross $\D_f$.
\end{ex}
\section*{Acknowledgments}
\noindent
This study was partly funded by: Research project of MIUR (Italian Ministry of Education, University and Research) Prin 2022 {\it Nonlinear differential problems with applications to real phenomena} (Grant No. 2022ZXZTN2).

The authors are members of the {\em Gruppo Nazionale per l'Analisi Matematica, la Probabilit\`a e le loro Applicazioni}
(GNAMPA) of the {\em Istituto Nazionale di Alta Matematica} (INdAM).
% Umberto Guarnotta is supported by the following research projects: 1) PRIN 2017 `Nonlinear Differential Problems via Variational, Topological and Set-valued Methods' (Grant no. 2017AYM8XW) of MIUR; 2) `PERITO' PRA 2020--2022 `PIACERI' Linea 3 of the University of Catania. \\
% Salvatore A. Marano is supported by the following research projects: 1) PRIN 2017 `Nonlinear Differential Problems via Variational, Topological and Set-valued Methods' (Grant no. 2017AYM8XW) of MIUR; 2) `MO.S.A.I.C.' PRA 2020--2022 `PIACERI' Linea 2 of the University of Catania. \\

\begin{small}

\end{small}
\end{document}